\numberwithin{equation}{section}
\numberwithin{figure}{section}
\theoremstyle{plain}
\newtheorem{thm}{\protect\theoremname}
  \theoremstyle{plain}
  \newtheorem{prop}[thm]{\protect\propositionname}
  \theoremstyle{plain}
  \newtheorem{cor}[thm]{\protect\corollaryname}
  \theoremstyle{plain}
  \newtheorem{lem}[thm]{\protect\lemmaname}
  \theoremstyle{remark}
  \newtheorem{rem}[thm]{\protect\remarkname}
  \theoremstyle{definition}
  \newtheorem{example}[thm]{\protect\examplename}
  \providecommand{\corollaryname}{Corollary}
  \providecommand{\examplename}{Example}
  \providecommand{\lemmaname}{Lemma}
  \providecommand{\propositionname}{Proposition}
  \providecommand{\remarkname}{Remark}
\providecommand{\theoremname}{Theorem}
\subjclass[2000]{Primary 35P20, 28A80; Secondary 42C99, 81Q10}
\keywords{Analysis on fractals, localized eigenfunctions, Sierpi\'nski gasket, Sz\"ego limit theorem}
\begin{document}

\title[Spectral Properties]{Some spectral properties of pseudo-differential operators on the Sierpinski Gasket}

\begin{abstract} We prove versions of the strong Sz\"ego limit theorem for certain classes of pseudodifferential operators defined on the Sierpi\'nski gasket. Our results used in a fundamental way the existence of localized eigenfunctions for the Laplacian on this fractal. 

\end{abstract}

\date{\today}
\author{Marius Ionescu} 
\address{Department of Mathematics,  United States Naval Academy, Annapolis,
  MD, 21402-5002, USA}

\email{felijohn@gmail.com}
\thanks{This work was partially supported by a grant from the Simons Foundation (\#209277 to Marius Ionescu).}
\thanks{The first author would like to thank Kasso Okoudjou and the Department
  of Mathematics at the University of Maryland, College Park, and the
  Norbert-Wiener Center for Harmonic Analysis and Applications for
  their hospitality.}

 \author{Kasso~A. Okoudjou}
\address{Department of Mathematics, University of Maryland, College
  Park, MD 20742-4015, USA}
\email{kasso@math.umd.edu}

\author{Luke~G. Rogers}
\address{Department of Mathematics, University of Connecticut, Storrs,
  CT 06269-3009, USA}
\email{rogers@math.uconn.edu}
\maketitle

\section{Introduction}
The results of this paper have their origin in a celebrated theorem of
Szeg\"{o}, who proved that if $P_{n}$ is projection onto the span of
$\{e^{im\theta},0\leq m\leq n\}$ in $L^{2}$ of the unit circle and
$[f]$ is multiplication by a  positive $C^{1+\alpha}$  function for
$\alpha>0$ then $(n+1)^{-1}\log\det P_{n}[f]P_{n}$ converges to
$\int_{0}^{2\pi} \log f(\theta)\,d\theta/2\pi$.  Equivalently,
$(n+1)^{-1} \operatorname{Trace} \log P_{n}[f]P_{n}$ has the same
limit.  The literature which has grown from this result is vast,
see~\cite{Simon}; here we are interested in the generalizations that
replace $[f]$ with a pseudodifferential operator defined on fractal sets, in which case the
Szeg\"{o} limit theorem may be viewed as a way to obtain asymptotic
behavior of the operator using only its symbol. In the setting of
Riemannian manifolds there are important results of this latter type  due to Widom
\cite{Wid_JFA79} and Guillemin \cite{Gui_AnnMathSt79}.

In~\cite{OkRoStr_JFAA10} a standard generalization of the classical
Szeg\"{o} theorem was proved in the setting where the underlying space
is the Sierpi\'{n}ski Gasket and $P_{\Lambda}$ is the projection onto the
eigenspace with eigenvalues less than $\Lambda$ of the Laplacian
obtained using analysis on fractals.  In this situation the projection
is not Toeplitz, so most classical techniques fail and the proofs
in~\cite{OkRoStr_JFAA10} rely instead on the fact that most
eigenfunctions are localized.  The present work continues this
development to consider the case where $[f]$ is replaced by a
pseudodifferential operator.  Our results are of three related types.
Our main result, Theorem~\ref{thm:Mainthm}, gives asymptotics when one
considers $\operatorname{Trace} F( P_{n}p(x,-\Delta)P_{n})$ where
$p(x,-\Delta)$ is the pseudodifferential operator and $F$ is
continuous on an interval containing the spectrum of
$P_{n}p(x,-\Delta)P_{n}$.  Theorem~\ref{thm:GeneralSzego}, which is in
some sense a special case of the main result but for which we give a
different proof, considers the classical case of $\log\det
P_{n}p(x,-\Delta)P_{n}$.
Theorem~\ref{thm:specclusters} gives the asymptotics of clusters of
eigenvalues of a pseudodifferential operator in terms of the symbol.
   In all cases the proofs rely on the
predominance of localized eigenfunctions in the Laplacian spectrum on
the Sierpi\'{n}ski Gasket.

\section{\label{sec:Background}Background}

\subsection{Analysis on the Sierpi\'nski gasket}\label{ssec:analysisonSG}
The Sierpi\'{n}ski gasket $X$ is the unique non-empty compact fixed
set of the iterated function system
$\{F_{j}=\frac{1}{2}(x-a_{j})+a_{j}\}$, $j=1,2,3$, 
where $\{a_{j}\}$ are not co-linear in $\mathbb{R}^{2}$.  For
$w=w_{1}\dotsm w_{N}$ a word of length $N$ with all
$w_{j}\in\{1,2,3\}$ let $F_{w}=F_{w_{1}}\circ\dotsm\circ F_{w_{N}}$
and call $F_{w}(X)$ an $N$-cell. We will sometimes write the
decomposition into $N$-cells as 
$X=\cup_{i=1}^{3^{N}}C_{i}$ where each $C_{i}=F_{w}(X)$ for a word $w$ of
length $N$. 

Equip $X$ with the unique probability measure $\mu$ for which an
$N$-cell has measure $3^{-N}$ and the symmetric self-similar
resistance form $\mathcal{E}$ in the sense of Kigami~\cite{Kig_CUP01}.
The latter is a Dirichlet form on $L^{2}(\mu)$ with domain
$\mathcal{F}\subset C(X)$ and by the theory of such forms
(see~\cite{FOT}) there is a negative definite self-adjoint Laplacian
$\Delta$ defined by $\mathcal{E}(u,v)=\int (-\Delta u)v\,d\mu$ for all
$v\in\mathcal{F}$ such that $v(a_{j})=0$ for $j=1,2,3$.  This
Laplacian is often called the Dirichlet Laplacian to distinguish it
from the Neumann Laplacian which is defined in the same way but
omitting the condition $v(a_{j})=0$. 

A complete description of the spectrum of $\Delta$ was given in~\cite{FukShi_92}
using the spectral decimation formula of~\cite{rammal1983random}.
The eigenfunctions of $\Delta$ are described in~\cite{DalStrVin_FDESG_JFAA99}
(see \cite{Tep_JFA98,OkRoStr_JFAA10,OkStr_PAMS07,BaKi_JLMS97} or
\cite{Str_Prin06} for proofs).   Without going into details, the features needed for our work are as follows.
The spectrum  is discrete and decomposes naturally into three sets
called the $2$-series, $5$-series and $6$-series eigenvalues, which
further decompose according to the \emph{generation of birth}, which
is a number $j\in\mathbb{N}$.  All $2$-series eigenvalues have
$j=1$ and multiplicity $1$, each $j\in\mathbb{N}$ occurs in the
$5$-series and the corresponding eigenspace has multiplicity
$(3^{j-1}+3)/2$, and each $j\geq2$ occurs in the $6$-series with
multiplicity $(3^{j}-3)/2$.  Moreover the $5$ and $6$-series
eigenvalues are localized.  Suppose $j>N\ge1$ and
$\{C_{i}\}_{i=1}^{3^{N}}$ is the $N$-cell decomposition as above.   If
$\lambda$ is a $5$-series eigenvalue with generation of birth $j$ and
eigenspace $E_{j}$ then there is a basis for $E_{j}$ in which
$(3^{j-1}-3^{N})/2$ eigenfunctions are localized at level $N$, meaning
that they are each supported on a single $N$-cell, and the remaining
$(3^{N}-3)/2$ are not localized at level $N$; the number localized on
a 
single $N$-cell is $(3^{j-N-1}-1)/2$.  If $\lambda$ is a $6$-series
eigenvalue with generation of birth $j$ and eigenspace $E_{j}$ then
there is a basis for $E_{j}$ in which $(3^{j}-3^{N+1})/2$ of the
eigenfunctions are localized at level $N$, the 
remaining $(3^{N+1}-3)/2$ eigenfunctions are not localized at level
$N$, and the number supported on a single $N$-cell is $(3^{j-N}-3)/2$  (see, for example~\cite{OkRoStr_JFAA10}).

% There is a constant $c>0$ such that, for each $m\ge1$, the bottom
% $(3^{m+1}-3)/2$ eigenvalues of the Dirichlet spectrum of $-\Delta$
% are bounded by $c5^{m}$. Moreover, for each generation of birth $j$
% such that $1\le j\le m$, there are $2^{m-j}$ $5$-series and $2^{m-j-1}$
% $6$-series such eigenvalues (see, for example,  \cite[Proposition 1]{OkRoStr_JFAA10}).

%%%%%%%%%%%%%%%%%%

\subsection{Pseudo-differential operators on the Sierpi\'nski gasket}

We recall some of the theory developed in~\cite{IoRoStr_PSDO_2011} and use it to define
pseudo-differential operators on the Sierpi\'nski gasket.  Let $\Delta$ be
the Dirichlet Laplacian. Then the
spectrum $\operatorname{sp}(-\Delta)$ of $-\Delta$ consists of finite-multiplicity 
eigenvalues that accumulate only at $\infty$ (\cite{Kig_CUP01,Str_Prin06}).  Arranging them as
$\operatorname{sp}(-\Delta)=\{\lambda_{1}\le\lambda_{2}\le\dots\le\lambda_{n}\le\dots\}$
with $\lim_{n\to\infty}\lambda_{n}=\infty$, let $\{\varphi_{n}\}_{n\in\mathbb{N}}$
be an orthonormal basis of $L^{2}(\mu)$ such that $\varphi_{n}$
is an eigenfunction with eigenvalue $\lambda_{n}$ for all $n\ge1$. The
set $D$ of finite linear combinations of $\varphi_{n}$ is dense in
$L^{2}(\mu)$. 

If $p:(0,\infty)\to\mathbb{C}$ is a measurable then
\[
p(-\Delta)u=\sum_{n}p(\lambda_{n})\langle u,\varphi_{n}\rangle\varphi_{n}
\]
for $u\in D$ gives a densely defined operator on $L^{2}(\mu)$
called a \emph{constant coefficient pseudo-differential operator}.  If $p$ is bounded, then $p(-\Delta)$ extends
to a bounded linear operator on $L^{2}(\mu)$ by the spectral theorem.

If $p$ is a $0$-symbol in the sense of 
\cite[Definition 3.1]{IoRoStr_PSDO_2011} then $p(-\Delta)$ is a pseudo-differential
operator of order $0$.  Moreover, it is a singular
integral operator on $L^{2}(\mu)$ (\cite[Theorem
3.6]{IoRoStr_PSDO_2011}) and therefore extends to a bounded operator
on $L^q(\mu)$ for all $q\in (1,\infty)$.

If $p:X\times(0,\infty)\to\mathbb{C}$ is measurable
we define a \emph{variable coefficient pseudo-differential operator}
$p(x,-\Delta)$ as in \cite[Definition 9.2]{IoRoStr_PSDO_2011}:
\begin{equation}
p(x,-\Delta)u(x)=\sum_{n}\int_{X}p(x,\lambda_{n})P_{\lambda_{n}}(x,y)u(y)d\mu(y)\label{eq:psdo}
\end{equation}
for $u\in D$, where $\{P_{\lambda_{n}}\}_{n\in\mathbb{N}}$ is
the spectral resolution of $-\Delta$. If $p$ is a 0-symbol in the
sense of \cite[Definition 9.1]{IoRoStr_PSDO_2011}, then $p(x,-\Delta)$
extends to a bounded linear operator on $L^{q}(\mu)$ for all $q\in
(1,\infty)$ (\cite[Theorem 9.3 and Theorem 9.6]{IoRoStr_PSDO_2011}).
If $p:X\to\mathbb{R}_{+}$ is bounded and \emph{independent} of $\lambda$,
then the operator that $p$ determines on $L^{2}(\mu)$ is multiplication
by $p(x)$. In this case, we write $[p]$ for this operator, following the notation in \cite{OkRoStr_JFAA10}. 

We describe next the relationship between the spectrum of a constant
coefficient pseudo-differential operator $p(-\Delta)$ and $\operatorname{sp}(-\Delta)$
for a continuous map $p$. Of course, the result is valid for all
the spaces considered in \cite{IoRoStr_PSDO_2011}, not only the Sierpi\'nski
gasket.
\begin{prop}
\label{prop:spectrum_psdo}If  $p:(0,\infty)\to\mathbb{C}$
is continuous then $\operatorname{sp}p(-\Delta)=\overline{p(\operatorname{sp}(-\Delta))}$.\end{prop}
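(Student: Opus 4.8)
The plan is to exploit the fact that $p(-\Delta)$ is diagonal in the orthonormal eigenbasis $\{\varphi_n\}$, so that it is unitarily equivalent to the operator on $\ell^2$ that multiplies the $n$-th coordinate by $p(\lambda_n)$; the spectrum of such a diagonal operator is the closure of its set of diagonal entries. First I would record that, because $-\Delta$ has discrete spectrum accumulating only at $\infty$, its spectrum is exactly the closed set of eigenvalues, $\operatorname{sp}(-\Delta)=\{\lambda_n:n\in\mathbb{N}\}$, and hence $\overline{p(\operatorname{sp}(-\Delta))}=\overline{\{p(\lambda_n):n\in\mathbb{N}\}}$. I would also pin down the meaning of $\operatorname{sp}p(-\Delta)$ by passing to the closed normal operator furnished by the functional calculus (the closure of the operator defined on $D$), whose domain is $\{u:\sum_n|p(\lambda_n)|^2|\langle u,\varphi_n\rangle|^2<\infty\}$.

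For the inclusion $\overline{p(\operatorname{sp}(-\Delta))}\subseteq\operatorname{sp}p(-\Delta)$ I would simply observe that each $\varphi_n$ is an eigenvector of $p(-\Delta)$ with eigenvalue $p(\lambda_n)$, so every $p(\lambda_n)$ is an eigenvalue and therefore lies in $\operatorname{sp}p(-\Delta)$. Since the spectrum of any closed operator is closed, it must contain $\overline{\{p(\lambda_n)\}}$, which is the set we want.

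For the reverse inclusion I would show that every $z\notin\overline{\{p(\lambda_n)\}}$ lies in the resolvent set. Such a $z$ satisfies $\delta:=\inf_n|p(\lambda_n)-z|>0$, so I can define $R_z u=\sum_n (p(\lambda_n)-z)^{-1}\langle u,\varphi_n\rangle\varphi_n$. The coefficients $(p(\lambda_n)-z)^{-1}$ are uniformly bounded by $\delta^{-1}$, so $R_z$ is bounded with $\|R_z\|\le\delta^{-1}$, it maps $L^2(\mu)$ into the domain of $p(-\Delta)$, and it is checked directly to be a two-sided inverse of $p(-\Delta)-z$. Hence $z\in\rho(p(-\Delta))$, which gives $\operatorname{sp}p(-\Delta)\subseteq\overline{\{p(\lambda_n)\}}$ and, combined with the previous paragraph, the claimed equality.

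The only genuinely delicate point is the bookkeeping for a possibly unbounded symbol $p$: one must fix the domain of the closed operator $p(-\Delta)$ and verify that $R_z$ has range exactly equal to that domain and composes correctly with $p(-\Delta)-z$ on both sides. Once the diagonal structure is made precise this is routine. I would note in passing that the continuity of $p$ is not actually essential to the argument, only that $p(\lambda_n)$ is defined for each $n$; it is the discreteness of $\operatorname{sp}(-\Delta)$ that forces $\operatorname{sp}(-\Delta)=\{\lambda_n\}$ and drives the whole computation.
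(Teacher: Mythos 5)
Your proof is correct and takes essentially the same diagonal-in-the-eigenbasis approach as the paper: the reverse inclusion via the uniform lower bound $\inf_n\vert p(\lambda_n)-z\vert>0$ and an explicitly bounded inverse is exactly the paper's argument, while for the forward inclusion you use that each $p(\lambda_n)$ is an eigenvalue together with closedness of the spectrum, where the paper argues contrapositively from the resolvent estimate $\Vert(\lambda I-p(-\Delta))\varphi_n\Vert\ge 1/C$ applied to eigenfunctions --- a cosmetic difference. Your extra care with the domain of the closed operator for unbounded $p$, and your observation that continuity of $p$ is not actually used (only discreteness of $\operatorname{sp}(-\Delta)$), are correct refinements of points the paper leaves implicit.
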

\begin{proof}
Let $\lambda$ be in the resolvent $\rho(p(-\Delta))$ of $p(-\Delta)$.
Then $\lambda I-p(-\Delta)$ has bounded inverse, so there
is $C>0$ such that  $\Vert(\lambda I-p(-\Delta))^{-1}u\Vert\le C\Vert u\Vert$
for all $u\in L^{2}(\mu)$. Let $v\in\operatorname{dom}p(-\Delta)$.
Then $(\lambda I-p(-\Delta))v\in L^{2}(\mu)$ and $\Vert(\lambda I-p(-\Delta))v\Vert\ge\frac{1}{C}\Vert v\Vert$.
 In particular, if $v=\varphi_{n}$ is an element of our $L^{2}(\mu)$ basis of eigenfunctions
of $-\Delta$ we obtain $\Vert(\lambda I-p(-\Delta))\varphi_{n}\Vert\ge1/C$. Since $p(-\Delta)\varphi_{n}=p(\lambda_{n})\varphi_{n}$
and $\Vert\varphi_{n}\Vert=1$ for all $n\in\mathbb{N}$, it follows
that $\lambda\notin\overline{p(\operatorname{sp}(-\Delta))}$.

For the converse, suppose $z\notin\overline{p(\operatorname{sp}(-\Delta))}$.
Then there is $K>0$ such that $\vert p(\lambda)-z\vert\ge K>0$ for
all $\lambda\in\operatorname{sp}(-\Delta)$ and thus $\vert p(\lambda)-z\vert^{-1}\le K^{-1}$
for all $\lambda\in\operatorname{sp}(-\Delta)$. Therefore $(p(-\Delta)-z)^{-1}$
is bounded on $L^{2}(\mu)$ and  $z\in\rho(p(-\Delta))$.
\end{proof}
%We will use the following corollary in Section \ref{sec:asymptotics-of-eigenvalue}.
\begin{cor}
\label{cor:spectrumifpinfty}If $p:(0,\infty)\to\mathbb{R}$ is 
continuous and $\lim_{\lambda\to\infty}p(\lambda)=\infty$
then $\operatorname{sp}p(-\Delta)=p(\operatorname{sp}(-\Delta))$.\end{cor}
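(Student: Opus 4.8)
The plan is to deduce the corollary from Proposition~\ref{prop:spectrum_psdo} by showing that under the additional hypotheses the closure in that statement is redundant. By the proposition we already have $\operatorname{sp}p(-\Delta)=\overline{p(\operatorname{sp}(-\Delta))}$, so the entire content of the corollary is the claim that $p(\operatorname{sp}(-\Delta))$ is already closed, i.e. that $\overline{p(\operatorname{sp}(-\Delta))}=p(\operatorname{sp}(-\Delta))$.

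First I would recall the structure of $\operatorname{sp}(-\Delta)$ from the background section: it is a discrete set of eigenvalues $\lambda_{1}\le\lambda_{2}\le\cdots$ with $\lambda_{n}\to\infty$. Thus $p(\operatorname{sp}(-\Delta))=\{p(\lambda_{n}):n\in\mathbb{N}\}$ is a countable subset of $\mathbb{R}$. To show it is closed, I would take a convergent sequence of points in it, say $p(\lambda_{n_{k}})\to y$, and argue that $y\in p(\operatorname{sp}(-\Delta))$. The key observation is that a convergent real sequence is bounded, so there is $M$ with $|p(\lambda_{n_{k}})|\le M$ for all $k$. The hypothesis $\lim_{\lambda\to\infty}p(\lambda)=\infty$ then forces the arguments $\lambda_{n_{k}}$ to stay in a bounded set: if they were unbounded we could pass to a subsequence on which $\lambda_{n_{k}}\to\infty$, whence $p(\lambda_{n_{k}})\to\infty$, contradicting the bound $M$.

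Once the $\lambda_{n_{k}}$ are confined to a bounded interval, I would use that $\operatorname{sp}(-\Delta)$ is discrete with no finite accumulation point: a bounded portion of the spectrum contains only finitely many eigenvalues. Hence the sequence $(\lambda_{n_{k}})$ takes only finitely many distinct values, so some value $\lambda_{m}\in\operatorname{sp}(-\Delta)$ is attained infinitely often; passing to that constant subsequence gives $p(\lambda_{n_{k}})=p(\lambda_{m})$ along it, and by uniqueness of limits $y=p(\lambda_{m})\in p(\operatorname{sp}(-\Delta))$. This establishes closedness, and combined with the proposition yields $\operatorname{sp}p(-\Delta)=p(\operatorname{sp}(-\Delta))$.

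I do not expect any serious obstacle here, since the argument is a soft topological consequence of two facts already in hand: the discreteness and unboundedness of $\operatorname{sp}(-\Delta)$ and the coercivity $p(\lambda)\to\infty$. The only point requiring a little care is to make sure the range values are real, which is why the hypothesis $p:(0,\infty)\to\mathbb{R}$ is imposed and why $\lim_{\lambda\to\infty}p(\lambda)=\infty$ is meaningful; with these in place the boundedness-forces-localization step above goes through cleanly.
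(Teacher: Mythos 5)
Your proposal is correct and follows essentially the same route as the paper: the paper's proof consists of the single observation that the hypothesis forces the only accumulation point of $\{p(\lambda_{n})\}_{n\in\mathbb{N}}$ to be $\infty$, so that the closure in Proposition~\ref{prop:spectrum_psdo} is redundant. Your argument simply spells out this observation in full (boundedness of a convergent sequence, coercivity of $p$ confining the $\lambda_{n_{k}}$ to a bounded set, and discreteness of $\operatorname{sp}(-\Delta)$ yielding a constant subsequence), which is a correct and complete elaboration of what the paper leaves implicit.
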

\begin{proof}
The hypothesis implies that the only accumulation point for $\{p(\lambda_{n})\}_{n\in\mathbb{N}}$
is $\infty$ so we can apply  Proposition \ref{prop:spectrum_psdo}.
\end{proof}

\section{Szeg\"{o} Limit Theorems for pseudo-differential operators on the
Sierpi\'nski gasket\label{sec:Szeg-Limit-Theorems}}

Let $X$, $\mu$ and $\Delta$  be Section \ref{sec:Background}.
We also follow the notation of Section 4 of \cite{OkRoStr_JFAA10}:
for $\Lambda>0$,  let $E_{\Lambda}$ be the span of all eigenfunctions
corresponding to eigenvalues $\lambda$ of $-\Delta$ with $\lambda\le\Lambda$, let $P_{\Lambda}$ be the orthogonal projection onto $E_{\Lambda}$, and
set $d_{\Lambda}$ to be the dimension of $E_{\Lambda}$. For an eigenvalue
$\lambda$ of $-\Delta$, write $E_{\lambda}$ for the eigenspace
of $\lambda$, $P_{\lambda}$ for the orthogonal projection onto $E_{\lambda}$,
and $d_{\lambda}$ for the dimension of $E_{\lambda}$.

\subsection{Trace-type Szeg\"{o} limit theorems}\label{subsec:trace}
\global\long\def\Tr{\operatorname{Tr}}
Fix a measurable map $p:X\times(0,\infty)\to\mathbb{R}$ and
let $p(x,-\Delta)$ be the densely defined operator as in \eqref{eq:psdo}.
We assume, unless otherwise stated, that $p(\cdot,\lambda)$ is continuous
for all eigenvalues $\lambda$ of $-\Delta$ and that there is a continuous
map $q:X\mapsto\mathbb{R}$ such that the following limit exists and
is uniform in $x$: 
\begin{equation}
\lim_{\lambda\in\operatorname{sp}(-\Delta),\lambda\to\infty}p(x,\lambda)=q(x)\label{eq:limit_un}.
\end{equation}
In this section we  study the asymptotic behavior of $\Tr F(P_{\Lambda}p(x,-\Delta)P_{\Lambda})$
as $\Lambda\to\infty$ for continuous functions $F$.  Our results are clearly true if $\Vert p\Vert_{\infty}=0$;  henceforth
we assume that $\Vert p\Vert_{\infty}>0$.

\begin{lem}
\label{lem:A_B}The eigenvalues of $P_{\Lambda}p(x,-\Delta)P_{\Lambda}$
are contained in a bounded interval $[A,B]$ for all $\Lambda>0$. 
\end{lem}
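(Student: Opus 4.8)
The plan is to reduce the statement to a single uniform operator-norm bound. For each fixed $\Lambda$ the space $E_{\Lambda}$ is finite dimensional of dimension $d_{\Lambda}$, so $P_{\Lambda}p(x,-\Delta)P_{\Lambda}$ is a finite-rank operator and has only finitely many eigenvalues, every eigenvector with nonzero eigenvalue lying in $E_{\Lambda}$. Since any eigenvalue $\nu$ of a bounded operator $T$ satisfies $|\nu|\le\Vert T\Vert$ and $\Vert P_{\Lambda}\Vert=1$, it suffices to produce a constant $C$, independent of $\Lambda$, with $\Vert p(x,-\Delta)v\Vert\le C\Vert v\Vert$ for every $v\in E_{\Lambda}$. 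The eigenvalues then lie in the closed disc of radius $C$, and, $p$ being real valued, in a fixed interval $[A,B]\subset[-C,C]$ on the real line.

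First I would record that the symbol itself is bounded. By the uniform limit \eqref{eq:limit_un} there is $\Lambda_{0}$ with $\sup_{x}|p(x,\lambda)-q(x)|<1$ for every eigenvalue $\lambda>\Lambda_{0}$; since $q$ is continuous on the compact set $X$ it is bounded, and each of the finitely many functions $p(\cdot,\lambda)$ with $\lambda\le\Lambda_{0}$ is continuous on $X$ hence bounded. Consequently $M:=\sup\{|p(x,\lambda)|:x\in X,\ \lambda\in\operatorname{sp}(-\Delta)\}<\infty$. Writing $p(x,\lambda)=q(x)+r(x,\lambda)$, the multiplication operator $[q]$ already satisfies $\Vert[q]v\Vert\le\Vert q\Vert_{\infty}\Vert v\Vert$, a $\Lambda$-independent contribution, so the whole difficulty is to control the remainder $r(x,-\Delta)$, whose symbol tends to $0$ uniformly as $\lambda\to\infty$.

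The main obstacle is precisely this uniform bound on the variable-coefficient remainder. Using the pointwise identity $p(x,-\Delta)\varphi_{k}=p(x,\lambda_{k})\varphi_{k}$ one sees that for $v=\sum_{\lambda_{k}\le\Lambda}c_{k}\varphi_{k}$ the quantity $\Vert r(x,-\Delta)v\Vert^{2}=\int_{X}\bigl(\sum_{k}c_{k}r(x,\lambda_{k})\varphi_{k}(x)\bigr)^{2}d\mu(x)$ contains cross terms $\int_{X}\varphi_{k}\varphi_{l}\,d\mu$ that do not vanish, because the $x$-dependence of $r$ destroys the orthogonality that trivializes the constant-coefficient case; a naive Cauchy--Schwarz estimate yields only a constant of size $\sum_{\lambda_{k}\le\Lambda}(\sup_{x}|r(x,\lambda_{k})|)^{2}$, which grows with $\Lambda$. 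To obtain a genuinely $\Lambda$-independent bound I would instead invoke the mapping properties of the calculus: by \cite[Theorem 9.3 and Theorem 9.6]{IoRoStr_PSDO_2011} the operator $p(x,-\Delta)$ extends to a bounded operator on $L^{2}(\mu)$, so one may take $C=\Vert p(x,-\Delta)\Vert_{L^{2}\to L^{2}}$ and conclude $\Vert P_{\Lambda}p(x,-\Delta)P_{\Lambda}\Vert\le C$ for all $\Lambda$. (In the absence of symbol regularity sufficient for that theorem, the same uniform control should instead follow from the predominance of localized eigenfunctions on $X$, which decouples the offending cross terms generation by generation.) Combined with the first paragraph this places every eigenvalue in the fixed bounded interval $[A,B]$, completing the argument.
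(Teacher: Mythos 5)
Your reduction of the lemma to a uniform-in-$\Lambda$ bound on $\Vert P_{\Lambda}p(x,-\Delta)P_{\Lambda}\Vert$ is legitimate, and you correctly isolate the cross-term obstruction, but the step you use to overcome it is not available under the hypotheses in force. Theorems 9.3 and 9.6 of \cite{IoRoStr_PSDO_2011} apply only when $p$ is a $0$-symbol in the sense of \cite[Definition 9.1]{IoRoStr_PSDO_2011}, which requires smoothness and symbol-type estimates in $\lambda$; the standing assumptions of Section \ref{subsec:trace} ask only that $p$ be measurable, that $p(\cdot,\lambda)$ be continuous at each eigenvalue, and that $p(x,\lambda)\to q(x)$ uniformly as $\lambda\to\infty$ along the spectrum. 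The paper itself flags this distinction: Remark (a) after Theorem \ref{thm:Mainthm} treats the $0$-symbol condition as an assumption \emph{in addition to} the standing ones. So your citation imports a hypothesis the lemma does not have, and your parenthetical fallback --- that localized eigenfunctions ``should'' decouple the cross terms generation by generation --- is precisely the missing content rather than an argument; as your own Cauchy--Schwarz computation shows, every obvious estimate on the remainder degrades like $\sqrt{d_{\Lambda}}$, and it is not at all clear that $\sup_{\Lambda}\Vert P_{\Lambda}p(x,-\Delta)P_{\Lambda}\Vert<\infty$ for a merely bounded symbol.

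The gap is avoidable because the lemma never needs a norm bound on $p(x,-\Delta)$ at all. The paper's proof works one eigenspace at a time: for $\varphi\in E_{\lambda}$ one has $p(x,-\Delta)\varphi=p(x,\lambda)\varphi$ pointwise, so
\[
\langle P_{\lambda}p(x,-\Delta)P_{\lambda}\varphi,\varphi\rangle=\int_{X} p(x,\lambda)\varphi(x)^{2}\,d\mu(x)
\]
exactly, with no cross terms entering. Choosing $\overline{\Lambda}$ from the uniform limit \eqref{eq:limit_un} so that $q(x)-\varepsilon<p(x,\lambda)<q(x)+\varepsilon$ for all eigenvalues $\lambda>\overline{\Lambda}$ yields the operator squeezing $P_{\lambda}[q-\varepsilon]P_{\lambda}\le P_{\lambda}p(x,-\Delta)P_{\lambda}\le P_{\lambda}[q+\varepsilon]P_{\lambda}$ on every high eigenspace, while the finitely many eigenvalues $\lambda\le\overline{\Lambda}$ are absorbed into the single fixed finite-dimensional block $P_{\overline{\Lambda}}p(x,-\Delta)P_{\overline{\Lambda}}$, whose spectrum is bounded simply because the block does not change with $\Lambda$. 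The interval $[A,B]$ is then read off from the extreme eigenvalues of $[q\mp\varepsilon]$ and of this one block, using the decomposition of $P_{\Lambda}L^{2}(\mu)$ into the eigenspaces below and above $\overline{\Lambda}$. This uses nothing beyond the lemma's stated hypotheses (boundedness of $q$ and the uniform limit), whereas your route requires either adding the $0$-symbol hypothesis --- thereby weakening the lemma --- or supplying a genuine proof of the localization-based decoupling you allude to.
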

\begin{proof}
Let $\varepsilon>0$. Then there is some $\overline{\Lambda}$ such that
if $\lambda\in\operatorname{sp}(-\Delta)$ and $\lambda>\overline{\Lambda},$
then $q(x)-\varepsilon<p(x,\lambda)<q(x)+\varepsilon$. Since 
\[
\langle P_{\lambda}p(x,-\Delta)P_{\lambda}\varphi,\varphi\rangle=\int p(x,\lambda)\varphi(x)^{2}d\mu(x)
\]
for all $\varphi\in E_{\lambda}$, it follows that, as operators,
$P_{\lambda}[q-\varepsilon]P_{\lambda}\le P_{\lambda}p(x,-\Delta)P_{\lambda}\le P_{\lambda}[q+\varepsilon]P_{\lambda}$.
Now $[q+\varepsilon]$ and $[q-\varepsilon]$ are bounded
on $L^{2}(\mu)$, $P_{\overline{\Lambda}}L^{2}(\mu)$ is finite dimensional,
and 
\[
P_{\Lambda}L^{2}(\mu)=\oplus_{\lambda\le\Lambda}P_{\lambda}L^{2}(\mu)=\Bigl( \oplus_{\lambda\le\overline{\Lambda}}P_{\lambda}L^{2}(\mu)\Bigr)\oplus\Bigl(\oplus_{\overline{\Lambda}<\lambda\le\Lambda}P_{\lambda}L^{2}(\mu)\Bigr),  
\]
so the assertion of the lemma holds with $A$ the minimum of the smallest eigenvalue of $[q-\varepsilon]$ and the
smallest eigenvalue of
$P_{\overline{\Lambda}}p(x,-\Delta)P_{\overline{\Lambda}}$, 
and $B$ the maximum of the $L^2$ norm of $[q+\varepsilon]$  and
the largest eigenvalue of $P_{\overline{\Lambda}}p(x,-\Delta)P_{\overline{\Lambda}}$.
\end{proof}

\begin{lem}
\label{lem:posfunctional}Let $\Lambda>0$. Let $p:X\times(0,\infty)\mathbb{\to}\mathbb{R}$
be a bounded measurable function such that $p(\cdot,\lambda_{n})$
is continuous for all $n\in\mathbb{N}$. Then the map on $C[A,B]$ defined by 
\[
F\mapsto\frac{1}{d_{\Lambda}}\Tr F(P_{\Lambda}p(x,-\Delta)P_{\Lambda})
\]
is a continuous non-negative functional, where $A$ and $B$ are as in
Lemma \ref{lem:A_B}.\end{lem}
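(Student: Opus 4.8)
The plan is to prove two things: that the functional $F \mapsto \frac{1}{d_\Lambda}\Tr F(P_\Lambda p(x,-\Delta)P_\Lambda)$ is linear and continuous on $C[A,B]$, and that it is non-negative (meaning $F \geq 0$ on $[A,B]$ implies the value is $\geq 0$). The key structural fact I would exploit is that $P_\Lambda p(x,-\Delta) P_\Lambda$, restricted to the finite-dimensional space $E_\Lambda = P_\Lambda L^2(\mu)$, is a self-adjoint operator on a space of dimension $d_\Lambda$. Its self-adjointness follows because $p$ is real-valued, so on each eigenspace the compression acts via the real multiplier $p(x,\lambda)$ through multiplication by a real function, and $P_\Lambda$ is an orthogonal projection. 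Consequently, by the spectral theorem in finite dimensions, $P_\Lambda p(x,-\Delta)P_\Lambda$ has a real orthonormal eigenbasis with real eigenvalues $\mu_1,\dots,\mu_{d_\Lambda}$, all lying in $[A,B]$ by Lemma~\ref{lem:A_B}.

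\smallskip

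First I would establish the self-adjointness of $P_\Lambda p(x,-\Delta)P_\Lambda$ on $E_\Lambda$ carefully. The operator $p(x,-\Delta)$ from \eqref{eq:psdo} has the form $\sum_n [p(\cdot,\lambda_n)] P_{\lambda_n}$, where $[p(\cdot,\lambda_n)]$ is multiplication by the real continuous function $p(\cdot,\lambda_n)$. Compressing by $P_\Lambda$ and using that $P_\Lambda$ commutes with each $P_{\lambda_n}$ for $\lambda_n \leq \Lambda$, one checks directly that the resulting operator is symmetric on the real Hilbert space $E_\Lambda$ with respect to the $L^2(\mu)$ inner product, using that each multiplication operator is self-adjoint. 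Once self-adjointness is in hand, the functional calculus $F(P_\Lambda p(x,-\Delta)P_\Lambda)$ is well-defined for any $F \in C[A,B]$ via the spectral decomposition, and the trace is simply $\Tr F(P_\Lambda p(x,-\Delta)P_\Lambda) = \sum_{k=1}^{d_\Lambda} F(\mu_k)$.

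\smallskip

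Given this formula, linearity in $F$ is immediate, and the two remaining claims follow quickly. For non-negativity: if $F \geq 0$ on $[A,B]$ then each $F(\mu_k) \geq 0$ since $\mu_k \in [A,B]$, so the normalized sum $\frac{1}{d_\Lambda}\sum_k F(\mu_k)$ is non-negative. For continuity with respect to the sup norm on $C[A,B]$: I would bound
\[
\left| \frac{1}{d_\Lambda} \sum_{k=1}^{d_\Lambda} F(\mu_k) \right| \leq \frac{1}{d_\Lambda} \sum_{k=1}^{d_\Lambda} |F(\mu_k)| \leq \|F\|_{C[A,B]},
\]
which shows the functional has norm at most $1$, hence is bounded and therefore continuous. (In fact this is a positive linear functional of the probability-measure type: it is integration of $F$ against the empirical spectral measure $\frac{1}{d_\Lambda}\sum_k \delta_{\mu_k}$.)

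\smallskip

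The main obstacle, and the only place requiring genuine care rather than routine bookkeeping, is verifying that the compression $P_\Lambda p(x,-\Delta) P_\Lambda$ is self-adjoint on $E_\Lambda$, so that the finite-dimensional spectral theorem applies and the trace-of-$F$ formula is legitimate. This rests on the realness of $p$ and the fact, used already in Lemma~\ref{lem:A_B}, that $\langle P_\lambda p(x,-\Delta)P_\lambda \varphi, \varphi\rangle = \int p(x,\lambda)\varphi(x)^2\, d\mu(x)$ is real and given by a multiplication operator on each eigenspace; the hypothesis that $p(\cdot,\lambda_n)$ is continuous (hence in particular real-valued and bounded, so defining a bounded self-adjoint multiplication operator) is exactly what is needed. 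Everything after self-adjointness is an elementary consequence of writing the trace as a sum over the real eigenvalues in $[A,B]$.
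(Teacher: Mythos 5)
Your overall route is the same as the paper's: the paper's entire proof is that linearity and non-negativity are clear and that continuity follows from the finite-dimensionality of $P_{\Lambda}L^{2}(\mu)$, and your trace-as-sum-over-eigenvalues bookkeeping, positivity argument, and norm-at-most-one bound are the natural expansion of exactly that. The problem is the step you yourself single out as the main obstacle: your verification of self-adjointness of $P_{\Lambda}p(x,-\Delta)P_{\Lambda}$ on $E_{\Lambda}$ fails when $p$ depends nontrivially on $\lambda$. In the eigenbasis the matrix entries are
\[
\langle P_{\Lambda}p(x,-\Delta)P_{\Lambda}\varphi_{n},\varphi_{m}\rangle=\int_{X}p(x,\lambda_{n})\varphi_{n}(x)\varphi_{m}(x)\,d\mu(x),
\]
while the transposed entry carries $p(x,\lambda_{m})$ in place of $p(x,\lambda_{n})$; for $\lambda_{n}\ne\lambda_{m}$ these generally differ, since the product $\varphi_{n}\varphi_{m}$ need not be orthogonal to $p(\cdot,\lambda_{n})-p(\cdot,\lambda_{m})$. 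Concretely, the summand $P_{\Lambda}[p(\cdot,\lambda_{n})]P_{\lambda_{n}}$ has range spread over all of $E_{\Lambda}$, and its adjoint $P_{\lambda_{n}}[p(\cdot,\lambda_{n})]P_{\Lambda}$ is not a summand of the original operator, so ``each multiplication operator is self-adjoint'' does not yield symmetry of the sum. The quadratic-form identity you quote from Lemma~\ref{lem:A_B} holds only for $\varphi\in E_{\lambda}$, i.e., it gives self-adjointness of each diagonal block $P_{\lambda}p(x,-\Delta)P_{\lambda}$ on the single eigenspace $E_{\lambda}$, not of the full compression.

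Without self-adjointness (or at least diagonalizability with real spectrum) the expression $F(P_{\Lambda}p(x,-\Delta)P_{\Lambda})$ for merely continuous $F$ is not defined by your spectral decomposition, so the rest of your argument does not get off the ground as written. The repair consistent with the paper is to adopt its implicit blockwise reading: both in the proof of Lemma~\ref{lem:A_B} and in the identity $\operatorname{Tr}(P_{\Lambda}p(x,-\Delta)P_{\Lambda})^{k}=\sum_{\lambda\le\Lambda}\operatorname{Tr}(P_{\lambda}p(x,-\Delta)P_{\lambda})^{k}$ asserted in the proof of Theorem~\ref{thm:Mainthm}, the operator is effectively treated as the direct sum $\oplus_{\lambda\le\Lambda}P_{\lambda}p(x,-\Delta)P_{\lambda}$, each block of which is genuinely self-adjoint with spectrum in $[A,B]$. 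Under that interpretation your formula $\operatorname{Tr}F(\cdot)=\sum_{k}F(\mu_{k})$, the positivity, and the bound by $\Vert F\Vert_{C[A,B]}$ are all correct and constitute a legitimate fleshing-out of the paper's one-line proof; alternatively, in the special case $p(x,\lambda)=f(x)$ the compression $P_{\Lambda}[f]P_{\Lambda}$ is honestly self-adjoint and your argument is complete as written. You should state which interpretation you are using rather than claiming a direct check that, as stated, is false.
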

\begin{proof}
The map is clearly linear and non-negative. Continuity follows
immediately from the fact that $P_{\Lambda}L^{2}(X,\mu)$ is finite
dimensional.
\end{proof}

In preparation for our main result, Theorem \ref{thm:Mainthm}, we prove a version for an increasing sequence
$\{\lambda_{j}\}$ of 6-series eigenvalues where $\lambda_{j}$ has generation of birth $j$.  $E_{j}$ is the eigenspace
corresponding to $\lambda_{j}$ and has  dimension $d_{j}=(3^{j}-3)/2$. For each $1\le N<j$ we have an orthonormal basis
$\{u_{k}\}_{k=1}^{d_{j}}=\{\overline{u}_{k}\}_{k=1}^{d_{j}^{N}}\cup\{v_{k}\}_{k=1}^{\alpha^{N}}$ 
for $E_{j}$ , where the $\overline{u}_{k}$ are localized at scale $N$
and the $v_{k}$ are not localized at scale $N$. Then $d_{j}^{N}=(3^{j}-3^{N+1})/2$,
$\alpha^{N}=(3^{N+1}-3)/2$ and the number of eigenvalues supported
on a single $N$-cell is $m_{j}^{N}=(3^{j-N}-3)/2$. As remarked in
Section 3 of \cite{OkRoStr_JFAA10}, an analogous construction may
be done for a $5$-series eigenfunction with $d_{j}=(3^{j-1}+3)$,
$d_{j}^{N}=(3^{j-1}-3^{N})/2$, $\alpha^{N}=(3^{N}-3)/2$, and
$m_{j}^{N}=(3^{j-N-1}-1)/2$.  It follows that the results that we prove for $6$-series
eigenvalues in Lemma~\ref{lem:simplefunction} and Theorem~\ref{thm:powers_6series} below are also true for the 5-series eigenvalues
with essentially the same proofs. 

Let $P_{j}$ denote the projection
onto $E_{j}$. The matrix $\Gamma_{j}:=P_{j}p(x,-\Delta)P_{j}$ is
a $d_{j}\times d_{j}$ matrix whose entries are given by 
\[
\gamma_{j}(m,l):=\int p(x,\lambda_{j})u_{m}(x)u_{l}(x)d\mu(x).
\]
Our first version of a Szeg\"{o} theorem is for the operator of multiplication by a simple function.
\begin{lem}
\label{lem:simplefunction} Let $N\ge1$ be fixed, and suppose $f=\sum_{i=1}^{3^{N}}a_{i}\chi_{C_{i}}$
is a simple function. Then for all $k\geq0$
\begin{equation}
\lim_{j\to\infty}\frac{\Tr(P_{j}[f]P_{j})^{k}}{d_{j}}=\int f(x){}^{k}d\mu(x).\label{eq:simple-function}
\end{equation}
\end{lem}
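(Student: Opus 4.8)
The plan is to exploit the fact that $f$ is constant on each $N$-cell together with the localization of the $\overline{u}_{k}$ in order to reduce $\Gamma_{j}=P_{j}[f]P_{j}$ to a diagonal operator modulo a correction of bounded rank. First I would observe that if $\overline{u}_{k}$ is one of the $d_{j}^{N}$ basis functions localized at scale $N$, say supported on the cell $C_{i}$, then $[f]\overline{u}_{k}=a_{i}\overline{u}_{k}$, since $f\equiv a_{i}$ on $C_{i}$ and multiplication does not enlarge supports; as $a_{i}\overline{u}_{k}\in E_{j}$ this gives $\Gamma_{j}\overline{u}_{k}=P_{j}[f]\overline{u}_{k}=a_{i}\overline{u}_{k}$. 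Writing $\Pi_{i}$ for the orthogonal projection onto the span of the localized basis functions supported on $C_{i}$, and $\Pi_{V}$ for the projection onto the span of the $\alpha^{N}$ non-localized functions $v_{k}$, I set $D:=\sum_{i=1}^{3^{N}}a_{i}\Pi_{i}$. Then $\Gamma_{j}\Pi_{i}=a_{i}\Pi_{i}$ for each $i$, whence $\Gamma_{j}=D+E$ with $E:=\Gamma_{j}\Pi_{V}$.

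The decisive structural point is that $\operatorname{rank}E\le\operatorname{rank}\Pi_{V}=\alpha^{N}=(3^{N+1}-3)/2$, which is fixed once $N$ is fixed and in particular independent of $j$, while $\|E\|\le\|\Gamma_{j}\|\le\|f\|_{\infty}$ and $\|D\|=\max_{i}|a_{i}|=\|f\|_{\infty}$. Expanding $\Gamma_{j}^{k}=(D+E)^{k}$ into $2^{k}$ words in $D$ and $E$, the single word $D^{k}$ is the diagonal term, while every other word contains at least one factor $E$ and therefore has rank at most $\alpha^{N}$ and operator norm at most $\|f\|_{\infty}^{k}$. Invoking the elementary bound $|\Tr M|\le\operatorname{rank}(M)\,\|M\|$ (from Weyl majorization of eigenvalues by singular values), each such word contributes at most $\alpha^{N}\|f\|_{\infty}^{k}$ to the trace, so that
\[
\bigl|\Tr\Gamma_{j}^{k}-\Tr D^{k}\bigr|\le(2^{k}-1)\,\alpha^{N}\,\|f\|_{\infty}^{k},
\]
a bound that is uniform in $j$.

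It then remains to evaluate the main term. Since $D$ has eigenvalue $a_{i}$ with multiplicity $m_{j}^{N}=(3^{j-N}-3)/2$ on the range of $\Pi_{i}$ and eigenvalue $0$ on the $\alpha^{N}$-dimensional non-localized subspace, for $k\ge1$ one gets $\Tr D^{k}=m_{j}^{N}\sum_{i=1}^{3^{N}}a_{i}^{k}$. Dividing by $d_{j}=(3^{j}-3)/2$, using $m_{j}^{N}/d_{j}\to3^{-N}$ as $j\to\infty$ together with the $O(1)$ error above, yields
\[
\lim_{j\to\infty}\frac{\Tr\Gamma_{j}^{k}}{d_{j}}=3^{-N}\sum_{i=1}^{3^{N}}a_{i}^{k}=\sum_{i=1}^{3^{N}}a_{i}^{k}\,\mu(C_{i})=\int f^{k}\,d\mu,
\]
because each $N$-cell has measure $3^{-N}$ and $f^{k}=\sum_{i}a_{i}^{k}\chi_{C_{i}}$ almost everywhere; the case $k=0$ is immediate since $\Gamma_{j}^{0}=I$ on $E_{j}$ and $\int f^{0}\,d\mu=1$. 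The hard part is controlling the contribution of the non-localized eigenfunctions to $\Tr\Gamma_{j}^{k}$, and the resolution is exactly the observation above: these are captured by a correction $E$ whose rank stays bounded by $\alpha^{N}$ uniformly in $j$, so the rank-times-norm trace estimate confines them to a $j$-independent error that is washed out after dividing by $d_{j}\to\infty$.
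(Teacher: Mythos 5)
Your proof is correct, and it rests on the same two pillars as the paper's: the $d_j^N$ eigenfunctions localized at scale $N$ are exact eigenvectors of $\Gamma_j=P_j[f]P_j$ with eigenvalues $a_i$, and the non-localized directions number only $\alpha^N=(3^{N+1}-3)/2$, independent of $j$. But your mechanism for disposing of the non-localized part is genuinely different. The paper uses the exact block-diagonal structure of $\Gamma_j$ in the basis $\{\overline{u}_k\}\cup\{v_k\}$ --- the off-diagonal blocks vanish because $\int f\,\overline{u}_m v_l\,d\mu=a_i\langle\overline{u}_m,v_l\rangle=0$, which is precisely the point of its parenthetical remark correcting the $\star$ to $0$ in equation (11) of \cite{OkRoStr_JFAA10} --- so that $\operatorname{Tr}\Gamma_j^k=\operatorname{Tr}R_j^k+\operatorname{Tr}N_j^k$ splits exactly, and it then bounds $\vert\operatorname{Tr}N_j^k\vert\le(\alpha^N)^k\Vert f\Vert_\infty^k$ entrywise. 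You never invoke (or need) the vanishing of the off-diagonal blocks: writing $\Gamma_j=D+E$ with $E=\Gamma_j\Pi_V$ of rank at most $\alpha^N$, expanding $(D+E)^k$ into words, and applying $\vert\operatorname{Tr}M\vert\le\operatorname{rank}(M)\,\Vert M\Vert$ to each word containing a factor $E$ yields the uniform error $(2^k-1)\alpha^N\Vert f\Vert_\infty^k$. This is a finite-rank-perturbation argument: slightly cruder in the combinatorial constant ($2^k$ words) but linear rather than $k$-th power in $\alpha^N$, and structurally more robust, since it uses only that the localized span consists of eigenvectors and not that its orthogonal complement in $E_j$ is invariant. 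Of course, since $\Gamma_j$ is self-adjoint and leaves the localized span invariant, the complement is in fact invariant, so your $E$ coincides with the paper's block $N_j$ padded by zeros and the two decompositions agree in the end; the paper's route buys the exact trace identity, yours buys independence from the block structure.
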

\begin{proof}
The case $k=0$ is trivial since $\Tr(I_{E_{j}})=d_{j}$. Let $k>0$
and fix $j>N$. The matrix $P_{j}[f]P_{j}$ has the following structure
with respect to the basis $\{u_{m}\}_{m=1}^{d_{j}}$:
\[
\left[\begin{array}{cc}
R_{j} & 0\\

0 & N_{j}
\end{array}\right],
\]
where $R_{j}$ is a $d_{j}^{N}\times d_{j}^{N}$-matrix corresponding
to the localized eigenvectors and $N_{j}$ is an $\alpha^{N}\times\alpha^{N}$-matrix
corresponding to the non-localized eigenvalues (see also equation
(11) of \cite{OkRoStr_JFAA10} -- notice, however, the small typographical error
in \cite{OkRoStr_JFAA10} where $\star$ should be $0$). Moreover,
the matrix $R_{j}$ consists of $3^{N}$ diagonal blocks; each block
is of the form $a_{i}I_{m_{j}^{N}}$, $i=1,\dots,3^{N}$. We have
that
\[
\Tr(P_{j}[f]P_{j}])^{k}=\Tr(R_{j})^{k}+\Tr(N_{j})^{k}.
\]
We can compute  the first term explicitly:
\[
\Tr(R_{j})^{k}=\sum_{i=1}^{3^{N}}m_{j}^{N}a_{i}^{k}=d_{j}^{N}\sum_{i=1}^{3^{N}}\frac{a_{i}^{k}}{3^{N}}=d_{j}^{N}\int f(x)^{k}d\mu(x).
\]
For the second term we use that each element in $N_{j}$
is smaller in absolute value than $\Vert f\Vert_{\infty}$. Hence
$\vert\Tr(N_{j})^{k}\vert\le(\alpha^{N})^{k}\Vert f\Vert_{\infty}^k$.
Using the fact that $d_{j}-d_{j}^{N}=\alpha^{N}$ we then obtain
that
\[
\left\vert \frac{\Tr(P_{j}[f]P_{j})^{k}}{d_{j}}-\int f(x)^{k}d\mu(x)\right\vert \le\frac{\alpha^{N}}{d_{j}}\int\vert f(x)\vert^{k}d\mu(x)+\frac{(\alpha^{N})^{k}}{d_{j}}\Vert f\Vert_{\infty}.
\]
Since $\lim_{j\to\infty}(\alpha^{N})^{k}/d_{j}=0$
for all $k\ge0$ and $f$ is bounded on the compact
set $X$ the result follows. 
\end{proof}
We use this lemma to prove the following Szeg\"{o}  theorem for pseudodifferential operators for a
sequence of $6$-series eigenvalues, extending \cite[Theorem 1]{OkRoStr_JFAA10}.
\begin{thm}
\label{thm:powers_6series} Let $p:X\times(0,\infty)\mathbb{\to}\mathbb{R}$
be a bounded measurable function such that $p(\cdot,\lambda_{j})$
is continuous for all $j\in\mathbb{N}$. Assume that $\lim_{j\to\infty}p(x,\lambda_{j})=q(x)$
is uniform in $x$. Then
\begin{equation}
\lim_{j\to\infty}\frac{1}{d_{j}}\Tr(P_{j}p(x,-\Delta)P_{j})^{k}=\int_{X}q(x){}^{k}d\mu(x)\label{eq:powsingeeigenv}
\end{equation}
for all $k\ge0$. Consequently,
\begin{equation}
\lim_{j\to\infty}\frac{1}{d_{j}}\Tr F(P_{j}p(x,-\Delta)P_{j})=\int_{X}F(q(x))d\mu(x)\label{eq:funcsingeeigenv}
\end{equation}
for any continuous $F$ supported on $[A,B]$, where 
$A$ and $B$ are as in Lemma~\ref{lem:A_B}.
\end{thm}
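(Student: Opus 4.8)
The plan is to exploit the fact that, on a single eigenspace $E_j$, the pseudodifferential operator collapses to a multiplication operator. Indeed, for $u\in E_j$ the spectral resolution in \eqref{eq:psdo} reduces to the single term with eigenvalue $\lambda_j$, so $p(x,-\Delta)u=p(\cdot,\lambda_j)\,u$ and hence $\Gamma_j=P_j p(x,-\Delta)P_j=P_j[p(\cdot,\lambda_j)]P_j$; this is exactly the content of the matrix entries $\gamma_j(m,l)$ recorded above. This reduces \eqref{eq:powsingeeigenv} to a statement about compressions of multiplication operators, which I would attack by approximating $q$ uniformly by a simple function and invoking Lemma~\ref{lem:simplefunction}. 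Fix $k\ge1$ and $\varepsilon>0$; since $q$ is continuous on the compact set $X$ and the $N$-cells have diameter tending to $0$, I can choose $N$ and a simple function $g=\sum_{i=1}^{3^{N}}a_i\chi_{C_i}$, constant on $N$-cells, with $\|q-g\|_\infty<\varepsilon$. By the uniform convergence $p(\cdot,\lambda_j)\to q$, for all large $j$ we have $\|p(\cdot,\lambda_j)-g\|_\infty<2\varepsilon$, so the difference of compressions $\Gamma_j-P_j[g]P_j=P_j[p(\cdot,\lambda_j)-g]P_j$ has operator norm at most $2\varepsilon$ on the $d_j$-dimensional space $E_j$.

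The core estimate is a telescoping bound for powers. Writing $S=\Gamma_j$ and $T=P_j[g]P_j$, both self-adjoint with operator norm bounded by a constant $M$ independent of $j$ (since $\|[h]\|_{op}=\|h\|_\infty$ and compression does not increase norm), I would use the identity
\[
S^{k}-T^{k}=\sum_{i=0}^{k-1}S^{i}(S-T)T^{k-1-i}
\]
together with $|\Tr C|\le d_j\|C\|_{op}$ for a $d_j\times d_j$ matrix $C$ to obtain $|\Tr S^{k}-\Tr T^{k}|\le k\,d_j\,M^{k-1}\|S-T\|_{op}$. Dividing by $d_j$ cancels the dimension and gives $\tfrac{1}{d_j}\bigl|\Tr\Gamma_j^{k}-\Tr(P_j[g]P_j)^{k}\bigr|\le 2k\varepsilon M^{k-1}$ for all large $j$.

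Combining this with Lemma~\ref{lem:simplefunction} applied to $g$, and the elementary bound $\bigl|\int g^{k}\,d\mu-\int q^{k}\,d\mu\bigr|\le k\varepsilon M^{k-1}$, the triangle inequality yields $\limsup_{j\to\infty}\bigl|\tfrac{1}{d_j}\Tr\Gamma_j^{k}-\int q^{k}\,d\mu\bigr|\le 3k\varepsilon M^{k-1}$; letting $\varepsilon\to0$ proves \eqref{eq:powsingeeigenv} (the case $k=0$ is immediate since $\tfrac{1}{d_j}\Tr I_{E_j}=1=\int 1\,d\mu$). For the consequence \eqref{eq:funcsingeeigenv}, I would define the positive linear functionals $L_j(F)=\tfrac{1}{d_j}\Tr F(\Gamma_j)$ on $C[A,B]$ (well defined and bounded by Lemma~\ref{lem:posfunctional}) and the limit functional $L(F)=\int_X F(q)\,d\mu$, noting that the range of $q$ lies in $[A,B]$ by the construction of $A,B$ in Lemma~\ref{lem:A_B}. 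Each $L_j$ is positive with $L_j(\mathbf 1)=1$, so $\|L_j\|=1$, and likewise $\|L\|=1$. Part \eqref{eq:powsingeeigenv} says $L_j\to L$ on every monomial, hence on every polynomial by linearity, and a standard $\varepsilon/3$ argument — approximating $F\in C[A,B]$ uniformly by a polynomial via Weierstrass and using the uniform bound $\|L_j\|=1$ — upgrades this to all of $C[A,B]$, giving \eqref{eq:funcsingeeigenv}.

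I expect the main obstacle to be the uniform-in-$j$ comparison estimate: one must ensure that the factors $k$ and $M^{k-1}$ do not interact badly with the growing dimension $d_j$. This is precisely why the telescoping identity combined with the crude bound $|\Tr C|\le d_j\|C\|_{op}$ is the right tool, since the offending factor $d_j$ is exactly cancelled by the normalization $1/d_j$, leaving a bound that depends only on $k$, $M$ and the approximation error $\varepsilon$.
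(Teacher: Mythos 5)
Your proof is correct, and its outer skeleton matches the paper's: reduce to the compression of a multiplication operator on $E_j$ (your observation that $p(x,-\Delta)u=p(\cdot,\lambda_j)u$ for $u\in E_j$ is immediate from \eqref{eq:psdo} and is exactly what the matrix entries $\gamma_j(m,l)$ encode), approximate $q$ uniformly by a simple function constant on $N$-cells, feed that into Lemma~\ref{lem:simplefunction}, and finish with a Weierstrass/positive-functional argument as in Lemma~\ref{lem:posfunctional}. Where you genuinely diverge is the comparison between $\Gamma_j=P_j[p(\cdot,\lambda_j)]P_j$ and $P_j[f_N]P_j$. The paper proves \eqref{eq:powsingeeigenv} by induction on $k$: it first shifts the symbol to $p+2\Vert p\Vert_\infty$ so as to work with symbols bounded below by $C>0$ (the induction hypothesis is needed to undo the shift, since the binomial expansion of the shifted power involves traces of all lower powers), then uses the operator inequalities $0\le P_j[f_N-\delta]P_j\le\Gamma_j\le P_j[f_N+\delta]P_j$ together with min-max (Weyl) monotonicity to get the eigenvalue-by-eigenvalue bound $\vert\sigma_m^j-\sigma_{m,N}^j\vert<\delta$, and finally invokes uniform continuity of $t\mapsto t^k$ on $[A,B]$. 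You replace this entire mechanism with the telescoping identity $S^k-T^k=\sum_{i=0}^{k-1}S^i(S-T)T^{k-1-i}$ and the crude bound $\vert\Tr C\vert\le d_j\Vert C\Vert_{\mathrm{op}}$, which gives $d_j^{-1}\vert\Tr S^k-\Tr T^k\vert\le kM^{k-1}\Vert S-T\Vert_{\mathrm{op}}$ directly; this eliminates both the induction and the positivity reduction and yields the explicit error $3k\varepsilon M^{k-1}$. What the paper's route buys is termwise control of the individual eigenvalues $\sigma_m^j$, a perturbation technique that recurs later (Lemma~\ref{lem:minmaxlemma} and the cluster asymptotics of Theorem~\ref{thm:specclusters}); what yours buys is brevity, no case distinction on the sign of $p$, and quantitative rates. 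Your closing remarks are also sound: the range of $q$ does lie in $[A,B]$ as constructed in Lemma~\ref{lem:A_B}, so $\int_X F(q)\,d\mu$ is meaningful for $F\in C[A,B]$, and the norm bound $\Vert L_j\Vert=L_j(\mathbf{1})=1$ legitimately upgrades convergence on polynomials to all of $C[A,B]$.
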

\begin{proof}
We prove \eqref{eq:powsingeeigenv} by induction. The case $k=0$ is trivial since
$(P_{j}p(x,-\Delta)P_{j})^{0}=I_{E_{j}}$ and $\Tr(I_{E_{j}})=d_{j}$.
Let $k\ge1$ and assume that \eqref{eq:powsingeeigenv} is true for
all $0\le m<k$. We claim that it suffices to prove the result in
the case when there is $C>0$ such that $p(x,\lambda)\ge C$ for all
$(x,\lambda)\in X\times(0,\infty)$. Indeed, $p(x,\lambda)+2\Vert p\Vert_{\infty}\ge\Vert p\Vert_{\infty}>0$
and, using the induction hypothesis, the result for $p(x,\lambda)$
follows if one proves \eqref{eq:powsingeeigenv} for $p(x,\lambda)+2\Vert p\Vert_{\infty}$.

Let $\varepsilon>0$ and assume that $\varepsilon<C/2$. Since the
function $\lambda\mapsto\lambda^{k}$ is uniformly continuous on $[A,B]$,
there is $0<\delta<\varepsilon$ such that if $\vert\lambda-\lambda^{\prime}\vert<\delta$
then $\vert\lambda^{k}-(\lambda^{\prime})^{k}\vert<\varepsilon$.
Since $q$ is continuous, we can find $N\ge1$ and a simple function
$f_{N}=\sum_{i=1}^{3^{N}}a_{k}\chi_{C_{i}}$, where $\{C_{i}\}_{i=1}^{3^{N}}$
is the decomposition of $X$ into $N$-cells, such that $\Vert q(\cdot)-f_{N}(\cdot)\Vert_{\infty}<\delta/2$.
Therefore, $\vert\int q(x)^{k}d\mu(x)-\int f_{N}(x)^{k}d\mu(x)\vert<\varepsilon$
since $\mu(X)=1$. Moreover, we can find $J\ge1$ such that $\Vert p(\cdot,\lambda_{j})-q(\cdot)\Vert_{\infty}<\delta/2$
for all $j\ge J$. It follows that
\begin{equation}
\frac{C}{2}\le f_{N}(x)-\delta\le p(x,\lambda_{j})\le f_{N}(x)+\delta\label{eq:fnp}
\end{equation}
for all $x\in X$ and $j\ge J$. By increasing $J$, if necessary,
we may assume by Lemma \ref{lem:simplefunction} that
\[
\left\vert \frac{\Tr(P_{j}[f_{N}]P_{j})^{k}}{d_{j}}-\int(f(x_{N}))^{k}d\mu(x)\right\vert <\varepsilon
\]
for all $j\ge J$.

Let $j\ge J$. Equation \eqref{eq:fnp} implies that $0\le P_{j}[f_{N}-\delta]P_{j}\le P_{j}p(x,-\Delta)P_{j}\le P_{j}[f_{N}+\delta]P_{j}$.
We conclude that $\vert\sigma_{m}^{j}-\sigma_{m,N}^{j}\vert<\delta$,
$m=1,\dots,d_{j}$, where $\sigma_{m}^{j}$ are the eigenvalues of
$P_{j}p(x,-\Delta)P_{j}$ and $\sigma_{m,N}^{j}$ are the eigenvalues
of $P_{j}[f_{N}]P_{j}$. Therefore $\vert(\sigma_{m}^{j})^{k}-(\sigma_{m,N}^{j})^{k}\vert<\varepsilon$
for all $m=1,\dots,d_{j}$. Since $\Tr(P_{j}p(x,-\Delta)P_{j})^{k}=\sum_{m}(\sigma_{m}^{j})^{k}$
and $\Tr(P_{j}[f_{N}]P_{j})^{k}=\sum_{m}(\sigma_{m,N}^{j})^{k}$,
we have that
\begin{align*}
\left\vert \frac{\Tr(P_{j}p(x,-\Delta)P_{j})^{k}}{d_{j}}-\int_{X}q(x)^{k}d\mu(x)\right\vert  & \le\left\vert \frac{\Tr(P_{j}p(x,-\Delta)P_{j})^{k}}{d_{j}}-\frac{\Tr(P_{j}[f_{N}]P_{j})^{k}}{d_{j}}\right\vert \\
 & +\left\vert \frac{\Tr(P_{j}[f_{N}]P_{j})^{k}}{d_{j}}-\int_{X}f_{N}(x)^{k}d\mu(x)\right\vert \\
 & +\vert\int_{X}f_{N}(x)^{k}d\mu(x)-\int_{X}q(x)^{k}d\mu(x)\vert\\
 & <\left\vert \frac{\sum_{m=1}^{d_{j}}\bigl((\sigma_{m}^{j})^{k}-(\sigma_{m,N}^{j})^{k}\bigr)}{d_{j}}\right\vert +2\varepsilon\\
 & <3\varepsilon.
\end{align*}
This finishes the proof of \eqref{eq:powsingeeigenv}.

For the last statement of the theorem, a proof similar to that of Lemma \ref{lem:posfunctional} shows
\[
F\mapsto\frac{1}{d_{j}}\Tr F(P_{j}p(x,-\Delta)P_{j})
\]
is a non-negative functional on the set of continuous functions supported
on $[A,B]$ for all $j\ge 1$. The Stone-Weierstrass
theorem implies the result.
\end{proof}

\begin{rem}
The multiplication operator $[f]$ is the special case $p(x,\lambda)=f(x)$ for a continuous function  $f:X\to\mathbb{R}$.  Then for $F$ continuous with support on $[-\Vert[f]\Vert,\Vert[f]\Vert]$
\[
\lim_{j\to\infty}\frac{1}{d_{j}}\Tr F(P_{j}[f]P_{j})=\int_{X}F(f(x))d\mu(x).
\]
Spectral multipliers occur in the special case where $p:(0,\infty)\to\mathbb{R}_{+}$ is a bounded measurable function
such that $\lim_{n\to\infty}p(\lambda_{n})=q$ for some constant $q$.  In this situation if $F$ is continuous with support on $[-\Vert p(-\Delta)\Vert,\Vert p(-\Delta)\Vert]$ then
\[
\lim_{j\to\infty}\frac{1}{d_{j}}\Tr F(P_{j}p(-\Delta)P_{j})=F(q).
\]
\end{rem}

The main goal of this section is to prove the following Szeg\"{o}-type
theorem for pseudo-differential operators on the Sierpi\'nski gasket.
It is an analogue of classical results like \cite[Theorem
29.1.7]{Hor_LPDEIV}, which seem to have originated in \cite{Wid_JFA79}, see
also \cite{Gui_AnnMathSt79}.
\begin{thm}
\label{thm:Mainthm}Let $p:X\times(0,\infty)\mathbb{\to}\mathbb{R}$
be a bounded measurable function such that $p(\cdot,\lambda_{n})$
is continuous for all $n\in\mathbb{N}$. Assume that % $p(x,-\Delta)$
% extends to a bounded operator on $L^{2}(\mu)$ and that
$\lim_{n\to\infty}p(x,\lambda_{n})=q(x)$
is uniform in $x$. Then, for any continuous function $F$ supported
on $[A,B]$, we have that
\begin{equation}
\lim_{\Lambda\to\infty}\frac{1}{d_{\Lambda}}\Tr F(P_{\Lambda}p(x,-\Delta)P_{\Lambda})=\int_{X}F(q(x))d\mu(x), \label{eq:GeneralizedSzego}
\end{equation}
where the interval $[A,B]$ is chosen as in Lemma~\ref{lem:A_B}.
\end{thm}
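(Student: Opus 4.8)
The plan is to prove \eqref{eq:GeneralizedSzego} first for the monomials $F(t)=t^{k}$ and then to pass to general continuous $F$ by a Stone--Weierstrass argument, exactly as in the last paragraph of the proof of Theorem~\ref{thm:powers_6series}. By Lemma~\ref{lem:A_B} every operator $P_{\Lambda}p(x,-\Delta)P_{\Lambda}$ has spectrum in the single interval $[A,B]$, and by the argument of Lemma~\ref{lem:posfunctional} the functionals $F\mapsto d_{\Lambda}^{-1}\Tr F(P_{\Lambda}p(x,-\Delta)P_{\Lambda})$ are non-negative of total mass $1$ on $C[A,B]$; since $q(X)\subseteq[A,B]$, it therefore suffices to establish
\[
\lim_{\Lambda\to\infty}\frac{1}{d_{\Lambda}}\Tr(P_{\Lambda}p(x,-\Delta)P_{\Lambda})^{k}=\int_{X}q(x)^{k}\,d\mu(x)
\]
for every $k\ge0$, the case $k=0$ being trivial.

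Next I would replace the symbol by a simple function. Given $\varepsilon>0$, use the uniform convergence \eqref{eq:limit_un} and the continuity of $q$ to choose $N$ and a simple function $f_{N}=\sum_{i=1}^{3^{N}}a_{i}\chi_{C_{i}}$ adapted to the $N$-cell decomposition with $\Vert q-f_{N}\Vert_{\infty}<\delta$, and $\overline{\Lambda}$ so large that $\Vert p(\cdot,\lambda)-q\Vert_{\infty}<\delta$ for every $\lambda\in\operatorname{sp}(-\Delta)$ with $\lambda>\overline{\Lambda}$, where $\delta=\delta(\varepsilon,k)$ is small. The multiplication compression $P_{\Lambda}[f_{N}]P_{\Lambda}$ is self-adjoint with norm at most $\Vert f_{N}\Vert_{\infty}$, and by the cumulative Szeg\"o theorem for multiplication operators (\cite[Theorem~1]{OkRoStr_JFAA10}, whose single-eigenspace analogue is Lemma~\ref{lem:simplefunction}) it satisfies $d_{\Lambda}^{-1}\Tr(P_{\Lambda}[f_{N}]P_{\Lambda})^{k}\to\int_{X}f_{N}^{k}\,d\mu$, while $\int f_{N}^{k}\,d\mu\to\int q^{k}\,d\mu$ as $\delta\to0$. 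The entire problem is thereby reduced to bounding
\[
\frac{1}{d_{\Lambda}}\bigl|\Tr(P_{\Lambda}p(x,-\Delta)P_{\Lambda})^{k}-\Tr(P_{\Lambda}[f_{N}]P_{\Lambda})^{k}\bigr|
\]
by $C_{k}\delta+o(1)$ as $\Lambda\to\infty$.

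The main obstacle, and the point at which this theorem genuinely goes beyond Theorem~\ref{thm:powers_6series}, is that on $E_{\Lambda}$ the operator $p(x,-\Delta)$ does \emph{not} act as a single multiplication operator: because $p(x,-\Delta)\varphi_{n}=p(x,\lambda_{n})\varphi_{n}$ carries a frequency-dependent factor, $P_{\Lambda}p(x,-\Delta)P_{\Lambda}$ is neither self-adjoint nor sandwiched between multiplication compressions, so the operator-inequality and eigenvalue-comparison step used on a single eigenspace is unavailable, and its singular values need not even remain bounded as $\Lambda\to\infty$. I would tame this using localization. Choosing a basis of $E_{\Lambda}$ in which the eigenfunctions localized at scale $N$ are grouped according to the single $N$-cell $C_{i}$ on which each is supported, all matrix entries coming from two functions with disjoint cell supports vanish, so on the localized part both $P_{\Lambda}p(x,-\Delta)P_{\Lambda}$ and $P_{\Lambda}[f_{N}]P_{\Lambda}$ are block diagonal over the $3^{N}$ cells; the $C_{i}$-block of $[f_{N}]$ is exactly $a_{i}I$, while that of $p(x,-\Delta)$ differs from $a_{i}I$ by an operator whose entries are bounded by $\Vert p(\cdot,\lambda)-a_{i}\Vert_{L^{\infty}(C_{i})}<2\delta$ at high frequency. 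By the predominance of localized eigenfunctions (the combinatorial heart of Lemma~\ref{lem:simplefunction} and of \cite{OkRoStr_JFAA10}), for each fixed $N$ the eigenfunctions with eigenvalue $\le\Lambda$ that fail to be localized at scale $N$, together with the finitely many below $\overline{\Lambda}$, number $o(d_{\Lambda})$. The delicate part — and where I expect the real work to lie — is that these non-localized modes still couple to the localized ones through the non-self-adjoint operator $p(x,-\Delta)$, so one must show that every cycle in the expansion of $\Tr(\cdot)^{k}$ passing through a non-localized index contributes $o(d_{\Lambda})$ despite the lack of a uniform norm bound; this is exactly the estimate that the self-adjointness of $[f_{N}]$ makes routine in \cite{OkRoStr_JFAA10} but which here must be extracted from the disjoint-support structure of the localized eigenfunctions. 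Granting it, the difference of the $k$-th powers is controlled blockwise on the dominant localized part, giving the desired $C_{k}\delta+o(1)$ bound; letting $\Lambda\to\infty$ and then $\delta\to0$ settles the monomial case, and Stone--Weierstrass completes the proof.
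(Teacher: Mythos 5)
There is a genuine gap at the center of your argument. The reduction to monomials, the simple-function approximation of $q$, and the Stone--Weierstrass finish all match the paper, and your diagnosis of the difficulty (the cumulative compression $P_{\Lambda}p(x,-\Delta)P_{\Lambda}$ is not self-adjoint and is not sandwiched between multiplication compressions, because of coupling between distinct eigenspaces and between localized and non-localized modes) is accurate. But the decisive estimate --- that every cycle in the expansion of $\operatorname{Tr}(P_{\Lambda}p(x,-\Delta)P_{\Lambda})^{k}$ passing through a non-localized or cross-eigenspace index contributes $o(d_{\Lambda})$ --- is introduced with ``granting it'' and never proved, and nothing in the sketch supplies it. Entry bounds alone do not suffice: each entry $\langle [p(\cdot,\lambda_{n})]\varphi_{n},\varphi_{m}\rangle$ is indeed bounded by $\Vert p\Vert_{\infty}$, but the indices non-localized at scale $N$ among all eigenvalues $\lambda\le\Lambda$ number $\alpha^{N}$ per \emph{distinct} eigenvalue, so your blockwise argument would additionally require comparing the number of distinct eigenvalues below $\Lambda$ with $d_{\Lambda}$ and controlling length-$k$ cycles that alternate between the bad indices and the $O(d_{\Lambda})$ good ones; none of this combinatorics is carried out. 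As written, the proposal is a correct setup plus an unproven core.

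The paper's proof takes a different and much shorter route that avoids the cumulative comparison with a multiplication operator altogether. It asserts the eigenspace-wise decomposition $\operatorname{Tr}(P_{\Lambda}p(x,-\Delta)P_{\Lambda})^{k}=\sum_{\lambda\le\Lambda}\operatorname{Tr}(P_{\lambda}p(x,-\Delta)P_{\lambda})^{k}$ together with $d_{\Lambda}=\sum_{\lambda\le\Lambda}d_{\lambda}$; on a \emph{single} eigenspace $p(x,-\Delta)$ acts as the self-adjoint compression $P_{\lambda}[p(\cdot,\lambda)]P_{\lambda}$, so the sandwich and eigenvalue-comparison machinery of Theorem~\ref{thm:powers_6series} (and its $5$-series analogue) applies verbatim and yields the per-eigenspace asymptotic~\eqref{eqn:56seriesTrapprox} uniformly over eigenvalues with generation of birth at least $J$; the eigenvalues with generation of birth at most $J$ are then disposed of by the counting estimate from the proof of Theorem~2 of \cite{OkRoStr_JFAA10}, which gives $\sum d_{\lambda}/d_{\Lambda}\to 0$ over that set. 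Note that the trace decomposition the paper labels ``Clearly'' is exactly where the cross-eigenspace coupling you worry about resides (it is immediate for $k=1$ but not for $k\ge 2$ unless the compression is block diagonal), so your analysis correctly isolates the nontrivial content of that one step; a complete proof must either justify that identity or establish your granted estimate, and the proposal does neither.
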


\begin{rem}\ \\
\begin{enumerate}
\item[(a)] 
If, in addition to the hypotheses of Theorem \ref{thm:Mainthm}, $p$ is a $0$-symbol in the sense
of  \cite[Definition 9.1]{IoRoStr_PSDO_2011}, then for any continuous $F$ on $[-\Vert p(x,-\Delta)\Vert,\Vert p(x,-\Delta)\Vert]$
we have
\[
\lim_{\Lambda\to\infty}\frac{1}{d_{\Lambda}}\Tr F(P_{\Lambda}p(x,-\Delta)P_{\Lambda})=\int_{X}F(q(x))d\mu(x).
\]

\item[(b)] Spectral mulitpliers are the special case where $p:(0,\infty)\to\mathbb{R}$ is a bounded measurable map
such that $\lim_{j\to\infty}p(\lambda_{j})=q$. Then for any continuous $F$ supported on $[-\Vert p(-\Delta)\Vert,\Vert p(-\Delta)\Vert]$
we have
\[
\lim_{\Lambda\to\infty}\frac{1}{d_{\Lambda}}\Tr F(P_{\Lambda}p(-\Delta)P_{\Lambda})=F(q).
\]

\item[(c)] Multiplication by a continuous $f:X\to\mathbb{R}$ is the special case $p(x,\lambda)=f(x)$. Then for any continuous $F$ supported on $[-\Vert[f]\Vert,\Vert[f]\Vert]$
we have
\[
\lim_{\Lambda\to\infty}\frac{1}{d_{\Lambda}}\Tr F(P_{\Lambda}[f]P_{\Lambda})=\int_{X}F(f(x))d\mu(x).
\]
If $\min f(x)>0$ then the above formula  can be obtained
from \cite[Theorem 3]{OkRoStr_JFAA10}. Specifically, the authors
of \cite{OkRoStr_JFAA10} proved that if $F>0$ is continuous, then
\[
\lim_{j\to\infty}\frac{\sum_{m=1}^{d_{j}}F(\sigma_{m}^{(j)})}{d_{j}}=\int F(f(x))d\mu(x),
\]
where $\sigma_{m}^{(j)}$ are the eigenvalues of $P_{j}[f]P_{j}$.
One can obtain our result by taking $F(x)=x^{k}$ and using the fact
that $\Tr(P_{j}[f]P_{j})^{k}=\sum_{m=1}^{d_{j}}(\sigma_{m}^{(j)})^{k}$. 
\end{enumerate}
\end{rem}

\begin{proof}
[Proof of Theorem \ref{thm:Mainthm} ] We prove first that
\begin{equation}
\lim_{\Lambda\to\infty}\frac{1}{d_{\Lambda}}\Tr(P_{\Lambda}p(x,-\Delta)P_{\Lambda})^{k}=\int_{X}q(x)^{k}d\mu(x)\label{eq:Szego_poly}
\end{equation}
for all $k\ge0$. It is easy to prove the formula for  $k=0$.
Let $k\ge1$ and let $\varepsilon>0$. Clearly 
\[
\Tr(P_{\Lambda}p(x,-\Delta)P_{\Lambda})^{k}=\sum_{\lambda\le\Lambda}\Tr(P_{\lambda}p(x,-\Delta)P_{\lambda})^{k}
\]
and $d_{\Lambda}=\sum_{\lambda\le\Lambda}d_{\lambda}$. The proof
of Theorem~\ref{thm:powers_6series} and its equivalent for $5$-series
imply that there is $J> 1$ such that if $\lambda$ is a $6$-series
or a $5$-series eigenvalue with generation of birth at least $J$,
then 
\begin{equation}\label{eqn:56seriesTrapprox}
\left\vert \frac{\Tr(P_{\lambda}p(x,-\Delta)P_{\lambda})^{k}}{d_{\lambda}}-\int q(x)^{k}d\mu(x)\right\vert <\varepsilon.
\end{equation}
\global\long\def\genbirth{\#}
We will write in this proof $\genbirth\lambda$ for the generation
of birth of the eigenvalue $\lambda$. For each $\Lambda>0$ we let
$\Gamma_J(\Lambda)$ be the set of eigenvalues $\lambda\le \Lambda$
such that $\genbirth\lambda> J$ and $\tilde{\Gamma}_J(\Lambda)$ be
the set of eigenvalues $\lambda\le \Lambda$ such that
$\genbirth\lambda\le J$. Notice that $\Gamma_J(\Lambda)$ consists only
of $5$- and $6$-series eigenvalues.

Now fix $\Lambda_{1}>0$ such that $\Gamma_J(\Lambda_1)\ne \emptyset$
and $\tilde{\Gamma}_J(\Lambda_1)\ne \emptyset$. Then, for
all $\Lambda>\Lambda_{1}$,
\begin{align}
\lefteqn{ \left\vert \frac{\Tr(P_{\Lambda}p(x,-\Delta)P_{\Lambda})^{k}}{d_{\Lambda}}-\int q(x)^{k}d\mu(x)\right\vert}\quad& \notag\\
& \le\frac{\sum_{\lambda\in  \tilde{\Gamma}_J(\Lambda) }\vert\Tr(P_{\lambda}p(x,-\Delta)P_{\lambda})^{k}-d_{\lambda}\int q(x)^{k}d\mu(x)\vert}{d_{\Lambda}} \notag\\
& \quad + \frac{\sum_{\lambda\in  \Gamma_J(\Lambda)}\vert\Tr(P_{\lambda}p(x,-\Delta)P_{\lambda})^{k}-d_{\lambda}\int q(x)^{k}d\mu(x)\vert}{d_{\Lambda}}\notag\\
&=I+II.\label{eq:first_step}
\end{align}

The proof of Theorem 2 of \cite{OkRoStr_JFAA10} implies that 
$\lim_{\Lambda\to\infty}\sum_{\lambda\in
  \tilde{\Gamma}_J(\Lambda)}d_{\lambda}/d_{\Lambda}=0$ (see
inequality (22) and the one following it from \cite{OkRoStr_JFAA10}).
Hence, there is $\Lambda_{2}>\Lambda_{1}$ such that, if $\Lambda>\Lambda_{2}$,
we have that 
\[
\frac{\sum_{\lambda\in \tilde{\Gamma}_J(\Lambda)}d_{\lambda}}{d_{\Lambda}}<\frac{\varepsilon}{\Vert p\Vert_{\infty}^{k}+\Vert q\Vert_{\infty}^{k}}.
\]
Since $\Tr(P_{\lambda}p(x,-\Delta)P_{\lambda})^{k}\le d_{\lambda}\Vert p\Vert_{\infty}^{k}$
for all $\lambda$ and $\mu(X)=1$ we obtain for $\Lambda>\Lambda_{2}$
\begin{equation*}
I
\le(\Vert p\Vert_{\infty}^{k}+\Vert
q\Vert_{\infty}^{k})\frac{\sum_{\lambda\in \tilde{\Gamma}_J(\Lambda)}d_{\lambda}}{d_{\Lambda}}<\varepsilon.
\end{equation*}
Finally, by~\eqref{eqn:56seriesTrapprox}, for $\Lambda>\Lambda_{2}$:
\begin{equation*}
II <\varepsilon\frac{\sum_{\lambda\in\Gamma_J(\Lambda)}d_{\lambda}}{d_{\Lambda}}  <\varepsilon.
\end{equation*}
Substitution into~\eqref{eq:first_step} gives
\[
\left\vert \frac{\Tr(P_{\Lambda}p(x,-\Delta)P_{\Lambda})^{k}}{d_{\Lambda}}-\int q(x)^{k}d\mu(x)\right\vert <2\varepsilon.
\]
and an application of the Stone-Weierstrass theorem completes the proof.
\end{proof}

We conclude this subsection  by showing that the hypothesis of uniform convergence of $\lim_{j\to\infty}p(x,\lambda_{j})=q(x)$ in Theorem \ref{thm:powers_6series} may be relaxed if we assume some smoothness conditions on $p$.  The result gives convergence of~\eqref{eq:funcsingeeigenv} along a subsequence of $\{\lambda_{j}\}_{j\in\mathbb{N}}$.
\begin{prop}
\label{prop:subsequence_AA}Let $\{\lambda_{j}\}_{j\in\mathbb{N}}$
be an increasing sequence of $6$-series eigenvalues such that $\lambda_{j}$
has generation of birth $j$, for all $j\ge1$. Assume that $p(\cdot,\lambda_{j})\in\operatorname{Dom}(\Delta)$
for all $j\in\mathbb{N}$. Assume that 
\[
\lim_{j\to\infty}p(x,\lambda_{j})=q(x)\quad\text{for all }x\in X
\]
and that both $p(\cdot,\lambda_{j})$ and $\Delta_{x}p(\cdot,\lambda_{j})$
are bounded uniformly in $j$. Then there is a subsequence $\{\lambda_{k_{j}}\}$
of $\{\lambda_{j}\}$ such that 
\[
\lim_{j\to\infty}\frac{1}{d_{k_{j}}}F(P_{k_{j}}p(x,-\Delta)P_{k_{j}})=\int_{X}F(q(x))d\mu(x)
\]
for all continuous functions $F$ supported on $[A,B]$.
\end{prop}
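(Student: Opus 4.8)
The plan is to exploit the extra regularity hypotheses to upgrade the pointwise convergence $p(\cdot,\lambda_{j})\to q$ into uniform convergence \emph{along a subsequence}, and then to invoke Theorem~\ref{thm:powers_6series}. The only obstruction to applying that theorem directly is that its hypothesis of uniform convergence has here been weakened to pointwise convergence; the assumptions that $p(\cdot,\lambda_{j})\in\operatorname{Dom}(\Delta)$ with $p(\cdot,\lambda_{j})$ and $\Delta_{x}p(\cdot,\lambda_{j})$ bounded uniformly in $j$ supply exactly the compactness needed to recover uniformity after passing to a subsequence.

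First I would show that the family $\{p(\cdot,\lambda_{j})\}_{j}$ is equicontinuous in $C(X)$. Write $u_{j}=p(\cdot,\lambda_{j})$ and $g_{j}=\Delta_{x}p(\cdot,\lambda_{j})$. Since $u_{j}$ lies in the (Dirichlet) domain, it vanishes at the boundary vertices, so $v=u_{j}$ is admissible in the defining relation of the Laplacian and $\mathcal{E}(u_{j},u_{j})=-\int g_{j}u_{j}\,d\mu\le\|g_{j}\|_{L^{2}(\mu)}\|u_{j}\|_{L^{2}(\mu)}$. Because $\mu(X)=1$ and both $u_{j}$ and $g_{j}$ are uniformly bounded, this bounds $\mathcal{E}(u_{j},u_{j})$ uniformly in $j$. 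The standard pointwise estimate for resistance forms, $|u_{j}(x)-u_{j}(y)|^{2}\le R(x,y)\,\mathcal{E}(u_{j},u_{j})$, where $R$ is the effective resistance metric on $X$, then controls the oscillation of each $u_{j}$ by $\bigl(\sup_{j}\mathcal{E}(u_{j},u_{j})\bigr)^{1/2}R(x,y)^{1/2}$. Since $R(x,y)\to0$ as $x\to y$ uniformly on the compact set $X$ (indeed $R$ is comparable to a positive power of the Euclidean metric on the Sierpi\'nski gasket), this gives a common modulus of continuity; together with the uniform sup-norm bound, the Arzel\`a--Ascoli theorem shows that $\{u_{j}\}$ is precompact in $C(X)$.

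Next I would extract a subsequence $\{\lambda_{k_{j}}\}$ along which $u_{k_{j}}=p(\cdot,\lambda_{k_{j}})$ converges uniformly. Because $u_{k_{j}}\to q$ pointwise by hypothesis, the uniform limit must be $q$; in particular $q$ is continuous, as a uniform limit of the continuous functions $u_{k_{j}}$, and $p(\cdot,\lambda_{k_{j}})\to q$ uniformly in $x$. The subsequence $\{\lambda_{k_{j}}\}$ is itself an increasing sequence of $6$-series eigenvalues whose generations of birth $k_{j}\to\infty$, and along it the uniform-convergence hypothesis of Theorem~\ref{thm:powers_6series} now holds. The proof of that theorem applies verbatim along this subsequence: for each fixed $N$ the estimate in Lemma~\ref{lem:simplefunction} depends only on $\alpha^{N}/d_{k_{j}}$ and $(\alpha^{N})^{k}/d_{k_{j}}$, both of which tend to $0$ since $d_{k_{j}}=(3^{k_{j}}-3)/2\to\infty$. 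This yields \eqref{eq:powsingeeigenv} along the subsequence, and a final application of the Stone--Weierstrass theorem, exactly as in Theorem~\ref{thm:powers_6series}, promotes the conclusion from the monomials $x^{k}$ to arbitrary continuous $F$ supported on $[A,B]$.

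The main obstacle is the equicontinuity in the first step; everything afterward is either a direct appeal to compactness or a rerun of the argument already given for Theorem~\ref{thm:powers_6series}. The real content lies in converting the two uniform bounds on $p(\cdot,\lambda_{j})$ and $\Delta_{x}p(\cdot,\lambda_{j})$ into a single uniform modulus of continuity, for which the resistance-form estimate $|u(x)-u(y)|^{2}\le R(x,y)\,\mathcal{E}(u,u)$ and the comparability of $R$ with a power of the Euclidean metric are the essential inputs.
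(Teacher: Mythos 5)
Your proposal is correct and follows essentially the same route as the paper's proof: the uniform bounds on $p(\cdot,\lambda_{j})$ and $\Delta_{x}p(\cdot,\lambda_{j})$ yield equicontinuity, Arzel\`a--Ascoli extracts a uniformly convergent subsequence whose limit is identified as $q$ by the pointwise convergence hypothesis, and Theorem~\ref{thm:powers_6series} is then applied along that subsequence. The only (harmless) difference is in how equicontinuity is obtained: the paper quotes the pointwise estimate $\vert f(x)-f(y)\vert\le C^{\prime}R(x,y)\bigl(\sup_{z}\vert\Delta f(z)\vert+\max_{p,q\in\partial X}\vert f(p)-f(q)\vert\bigr)$ from \cite{IR_2010}, while you derive a H\"older-$\tfrac{1}{2}$ modulus in the resistance metric directly, via Gauss--Green (valid since Dirichlet-domain functions vanish at the boundary vertices) together with the defining estimate $\vert u(x)-u(y)\vert^{2}\le R(x,y)\,\mathcal{E}(u,u)$ --- both give the common modulus of continuity that the argument needs.
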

\begin{proof}
Recall from the proof of \cite[Lemma 3.3]{IR_2010} that there is
a constant $C^{\prime}>0$ such that for any $f\in\operatorname{dom}(\Delta)$
and for any $x,y\in X$, 
\[
\vert f(x)-f(y)\vert\le C^{\prime}R(x,y)\left(\sup_{z\in X}\vert\Delta f(z)\vert+\max_{p,q\in\partial X}\vert f(p)-f(q)\vert\right).
\]
Since, by our hypothesis, $p(x,\lambda_{j})$ and $\Delta_{x}p(x,\lambda_{j})$
are bounded uniformly in $j$, using the above estimate, we can find
a constant $C>0$ such that
\[
\vert p(x,\lambda_{j})-p(y,\lambda_{j})\vert\le CR(x,y)
\]
for all $j\in\mathbb{N}$. Hence the sequence$\{p(\cdot,\lambda_{j})\}$
is equicontinuous. Since the sequence is also uniformly bounded, the
Arzel\'a-Ascoli theorem implies that there is a subsequence $\{p(\cdot,\lambda_{k_{j}})\}$
such that 
\[
\lim_{j\to\infty}p(x,\lambda_{k_{j}})=q(x)
\]
uniformly in $x$. Therefore we can apply Theorem \ref{thm:powers_6series}
to the subsequence $\{p(x,\lambda_{k_{j}})\}$ to obtain the conclusion.
\end{proof}

\subsection{Determinant  Szeg\"{o}-type limit theorems}
Proposition~\ref{thm:Szego_uniflimit} below is a generalization of \cite[Theorem 1]{OkRoStr_JFAA10}, and can be proved using Theorem~\ref{thm:powers_6series}. Specifically,
the result would follow if we set $F(x)=\log(x)$ and consider a positive
map $p$. We  provide an alternative proof  following
the steps of the proof of \cite[Theorem 1]{OkRoStr_JFAA10}. We
hope that our argument will shed light on some of the  technical
details that are sketched in the above mentioned proof. We still assume
that $\{\lambda_{j}\}_{j\ge1}$ is a fixed increasing sequence of
6-series or $5$-series eigenvalues such that $\lambda_{j}$ has generation
of birth $j$ for all $j\in\mathbb{N}$. 
\begin{prop}
\label{thm:Szego_uniflimit}Assume that $p:X\times(0,\infty)\to\mathbb{R}_{+}$
is a measurable function such that $p(\cdot,\lambda_{j})$ is continuous
for all $j\in\mathbb{N}$ and such that there is $C>0$ so that $p(x,\lambda_{j})\ge C$
for all $(x,\lambda_{j})$. Assume that 
\begin{equation}
\lim_{j\to\infty}p(x,\lambda_{j})=q(x)\label{eq:limit}
\end{equation}
exists and the limit is uniform in $x$. Then
\begin{equation}
\lim_{j\to\infty}\frac{1}{d_{j}}\log\det P_{j}p(x,-\Delta)P_{j}=\int_{X}\log q(x)d\mu(x).\label{eq:Szego}
\end{equation}
\end{prop}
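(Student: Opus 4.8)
The plan is to exploit the identity $\log\det M=\Tr\log M$, valid for positive definite $M$, which converts the determinant into a trace and so places us in the setting of Theorem~\ref{thm:powers_6series}. The hypothesis $p(x,\lambda_j)\ge C>0$ is exactly what makes this viable: by the computation $\langle P_j p(x,-\Delta)P_j\varphi,\varphi\rangle=\int p(x,\lambda_j)\varphi(x)^2\,d\mu(x)$ from Lemma~\ref{lem:A_B}, the compression $P_j p(x,-\Delta)P_j$ is positive definite with all eigenvalues in $[A,B]$ where $A\ge C>0$, so $\log$ is uniformly continuous on an interval bounded away from $0$. Following the structure of \cite[Theorem~1]{OkRoStr_JFAA10}, I would first establish the conclusion for multiplication by a simple function and then pass to general $p$ by an approximation argument parallel to the proof of Theorem~\ref{thm:powers_6series}.

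For the simple-function case I would fix $N\ge1$ and a simple function $f_N=\sum_{i=1}^{3^N}a_i\chi_{C_i}$ with $a_i\ge C/2>0$. By Lemma~\ref{lem:simplefunction} the matrix $P_j[f_N]P_j$ is block diagonal,
\[
P_j[f_N]P_j=\left[\begin{array}{cc}R_j&0\\0&N_j\end{array}\right],
\]
where $R_j$ is itself block diagonal with blocks $a_iI_{m_j^N}$. Hence $\log\det P_j[f_N]P_j=\log\det R_j+\log\det N_j$ and, using $m_j^N=d_j^N/3^N$,
\[
\log\det R_j=m_j^N\sum_{i=1}^{3^N}\log a_i=d_j^N\sum_{i=1}^{3^N}\frac{\log a_i}{3^N}=d_j^N\int_X\log f_N(x)\,d\mu(x).
\]
The non-localized block $N_j$ is $\alpha^N\times\alpha^N$ with $\alpha^N$ depending only on $N$, and its eigenvalues lie in $[\min_i a_i,\max_i a_i]\subset[C/2,\|f_N\|_\infty]$, so $|\log\det N_j|\le\alpha^N\max\bigl(|\log(C/2)|,\,|\log\|f_N\|_\infty|\bigr)$ is bounded uniformly in $j$. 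Dividing by $d_j$ and using $d_j^N/d_j\to1$ (since $d_j-d_j^N=\alpha^N$ is fixed) would give $d_j^{-1}\log\det P_j[f_N]P_j\to\int_X\log f_N\,d\mu$.

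For general $p$, I would argue exactly as in Theorem~\ref{thm:powers_6series}. Given $\varepsilon>0$, choose $\delta\in(0,\varepsilon)$ from uniform continuity of $\log$ on $[C/2,B]$, then pick $N$ and simple $f_N$ with $\|q-f_N\|_\infty<\delta/2$ and $J$ with $\|p(\cdot,\lambda_j)-q\|_\infty<\delta/2$ for $j\ge J$, so that $|p(x,\lambda_j)-f_N(x)|<\delta$ and $f_N(x)-\delta\ge C/2$. The quadratic-form computation shows $P_j\bigl(p(x,-\Delta)-[f_N]\bigr)P_j$ has norm at most $\delta$, whence by Weyl's inequality the eigenvalues $\sigma_m^j$ of $P_j p(x,-\Delta)P_j$ and $\sigma_{m,N}^j$ of $P_j[f_N]P_j$ satisfy $|\sigma_m^j-\sigma_{m,N}^j|\le\delta$, with both families in $[C/2,B]$. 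Uniform continuity then gives $|\log\sigma_m^j-\log\sigma_{m,N}^j|<\varepsilon$, so with $\log\det=\sum_m\log\sigma_m$,
\[
\left|\frac{\log\det P_j p(x,-\Delta)P_j}{d_j}-\frac{\log\det P_j[f_N]P_j}{d_j}\right|<\varepsilon.
\]
Combining this with the simple-function limit and with $|\int\log q-\int\log f_N|<\varepsilon$ (from $\|q-f_N\|_\infty<\delta/2$ and the lower bound $C/2$), and letting $\varepsilon\to0$, would establish \eqref{eq:Szego}.

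I expect the genuine content to lie not in any single hard step but in correctly accounting for the two blocks: the localized block $R_j$ grows with $j$ and supplies $\int\log f_N$ through the exact multiplicity $m_j^N=d_j^N/3^N$, while the non-localized block $N_j$ must be shown negligible. The latter is where positivity is essential — it is what guarantees that $\log\det N_j$ is defined and, because $\dim N_j=\alpha^N$ is fixed, uniformly bounded, so that it vanishes after division by $d_j\to\infty$. If one drops $p\ge C$ the logarithm is no longer uniformly continuous on the spectral interval and both the simple-function computation and the sandwiching step fail, which is precisely why the positivity hypothesis cannot be removed in this determinant formulation.
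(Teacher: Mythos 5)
Your proof is correct, but it takes a genuinely different route from the one the paper writes out --- indeed, it carries out precisely the alternative that the paper's preamble mentions and then declines to pursue (``the result would follow if we set $F(x)=\log(x)$''). The paper's own argument uses a \emph{multiplicative} sandwich: from $\Vert p(\cdot,\lambda_j)/f_N-1\Vert_\infty<\varepsilon$ it gets the operator ordering $0\le P_j[f_N(1-\varepsilon)]P_j\le P_jp(x,-\Delta)P_j\le P_j[f_N(1+\varepsilon)]P_j$ and then invokes monotonicity of the determinant under the positive semidefinite order (\cite[Corollary 7.7.4]{HoJo_MatAn90}) to trap $\log\det\Gamma_j$ between two block-structured determinants that it computes directly. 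You instead use an \emph{additive} sandwich: the quadratic-form bound $\Vert P_j(p(x,-\Delta)-[f_N])P_j\Vert\le\delta$, Weyl's eigenvalue perturbation inequality, and uniform continuity of $\log$ on $[C/2,B]$ --- in effect rerunning the proof of Theorem~\ref{thm:powers_6series} with $F=\log$, supplemented by your direct computation of $d_j^{-1}\log\det P_j[f_N]P_j$ for simple $f_N$, which plays the role of Lemma~\ref{lem:simplefunction}. The essential content is the same in both: the block-diagonal structure of $P_j[f_N]P_j$ coming from localized eigenfunctions, the exact multiplicity $m_j^N=d_j^N/3^N$ turning the localized block into $d_j^N\int_X\log f_N\,d\mu$, and the fixed dimension $\alpha^N$ of the non-localized block making it vanish after division by $d_j$. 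What the paper's route buys is avoiding eigenvalue perturbation theory (determinant monotonicity substitutes for Weyl's inequality); what yours buys is a cleaner articulation of where positivity enters (only through uniform continuity of $\log$ on an interval bounded away from $0$), and incidentally your estimate $\vert\log\det N_j\vert\le\alpha^N\max\bigl(\vert\log(C/2)\vert,\log\Vert f_N\Vert_\infty\bigr)$ correctly accounts for eigenvalues of the non-localized block that lie below $1$, a case the paper's stated bound $\alpha^N\log\Vert f_N\Vert_\infty+\alpha^N\log(1+\varepsilon)$ glosses over. The only housekeeping you leave implicit is that $\delta$ must also be taken small relative to $C$ (say $\delta<C/3$) so that $f_N-\delta\ge C/2$, exactly as the paper assumes $\varepsilon<\min(1,C/4)$; this is cosmetic, not a gap.
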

\begin{proof}
First we notice that since the limit \eqref{eq:limit} is uniform
in $x$, the function $q$ is continuous on $X$ and $q(x)\ge C>0$
for all $x\in X$. Hence $\log q(x)$ is integrable. 

Let $\varepsilon>0$ and assume that $\varepsilon<\min(1,C/4)$. There
exists $N\ge1$ and a simple function $f_{N}=\sum_{i=1}^{3^{N}}a_{i}\chi_{C_{i}}$,
where $\{C_{i}\}_{i=1}^{3^{N}}$ is a decomposition of $X$ into cells
of order $N$ as in Section~\ref{ssec:analysisonSG}, such that $\Vert q(\cdot)-f_{N}(\cdot)\Vert_{\infty}<(1/2)\min\bigl(\varepsilon,\varepsilon C/2\bigr)$.
Therefore $f_{N}(x)\ge 3C/4>0$ for all $x\in X$. Since $\log$ is
a continuous function, by increasing $N$, if necessary, we can also
assume that 
\[
\vert\int\log q(x)d\mu(x)-\int_{X}\log f_{N}(x)d\mu(x)\vert<\varepsilon.
\]
Since we assume that the limit in \eqref{eq:limit} is uniform, there
is $J\ge1$ such that $\frac{\alpha^{N}}{d_{j}}\log\Vert f_{N}\Vert<\varepsilon$
and $\Vert p(x,\lambda_{j})-q(x)\Vert_{\infty}<(1/2)\min\bigl(\varepsilon,\varepsilon C/2\bigr)$ 
for all $j\ge J$. Hence $\Vert p(\cdot,\lambda_{j})-f_{N}(\cdot)\Vert_{\infty}<\min(\varepsilon,\varepsilon C/2)$
and 
\begin{equation}
1-\varepsilon<\frac{p(x,\lambda_{j})}{f_{N}(x)}<1+\varepsilon\label{eq:poverf}
\end{equation}
 for all $x\in X$ and $j\ge J$.

Let $j\ge J$ be fixed. Recall that the operator $P_{j}p(x,-\Delta)P_{j}$
has a block structure 
\[
\Gamma_{j}=\begin{bmatrix}R_{j} & *\\
* & N_{j}
\end{bmatrix}
\]
with respect to the basis $\{u_{k}\}_{k=1}^{d_{j}}$. The entries
of $\Gamma_{j}$ are given by
\[
\gamma_{j}(i,k)=\int_{X}p(x,\lambda_{j})u_{i}(x)u_{k}(x)d\mu(x).
\]
Similarly, the operators $P_{j}[f_{N}(1-\varepsilon)]P_{j}$ and $P_{j}[f_{N}(1+\varepsilon)]P_{j}$
have block structures (see also \cite{OkRoStr_JFAA10}) 
\[
\Gamma_{j,N}(\pm\varepsilon)=\begin{bmatrix}R_{j,N}(\pm\varepsilon) & 0\\
0 & N_{j,N}(\pm\varepsilon)
\end{bmatrix}
\]
respectively. The blocks $R_{j}$, $R_{j,N}(-\varepsilon)$, and $R_{j,N}(\varepsilon)$
are $d_{j}^{N}\times d_{j}^{N}$ blocks corresponding to the ``localized''
part, while $N_{j}$, $N_{j,N}(-\varepsilon)$, and $N_{j,N}(\varepsilon)$
correspond to the ``nonlocalized'' part. The inequality \eqref{eq:poverf}
implies that 
\[
0\le\langle\Gamma_{j,N}(-\varepsilon)g,g\rangle\le\langle\Gamma_{j}g,g\rangle\le\langle\Gamma_{j,N}(\varepsilon)g,g\rangle
\]
 for all $g\in E_{j}$. Thus, as operators, $0\le\Gamma_{j,N}(-\varepsilon)\le\Gamma_{j}\le\Gamma_{j,N}(\varepsilon)$.
 \cite[Corollary 7.7.4]{HoJo_MatAn90} implies that
\[
\det\Gamma_{j,N}(-\varepsilon)\le\det\Gamma_{j}\le\det\Gamma_{j,N}(\varepsilon).
\]
Hence 
\[
\log\det\Gamma_{j,N}(-\varepsilon)\le\log\det\Gamma_{j}\le\log\det\Gamma_{j,N}(\varepsilon).
\]
The block $R_{j,N}(\varepsilon)$ consists of $3^{N}$ blocks of the
form $a_{i}(1+\varepsilon)I_{m_{j}^{N}}$, $i=1,\dots,3^{N}$. Hence
\[
\log\det\Gamma_{j,N}(\varepsilon)=d_{j}^{N}\int\log f_{N}(x)d\mu(x)+d_{j}^{N}\log(1+\epsilon)+\log\det N_{j,N}(\varepsilon).
\]
An estimate as in \cite{OkRoStr_JFAA10} shows that $\vert\log\det N_{j,N}(\varepsilon)\vert\le\alpha^{N}\log\Vert f_{N}\Vert_{\infty}+\alpha^{N}\log(1+\varepsilon)$.
Similarly,
\[
\log \det\Gamma_{j,N}(-\varepsilon)=d_{j}^{N}\int\log f_{N}(x)d\mu(x)+d_{j}^{N}\log(1-\epsilon)+\log\det N_{j,N}(-\varepsilon)
\]
and $\vert\log\det N_{j,N}(\varepsilon)\vert\le\alpha^{N}\log\Vert f_{N}\Vert_{\infty}+\alpha^{N}\log(1-\varepsilon)$.
Using the fact that $d_{j}=d_{j}^{N}+\alpha^{N}$, we obtain that
\begin{align*}
\left\vert \frac{\log\det P_{j}p(x,-\Delta)P_{j}}{d_{j}}-\int\log q(x)d\mu(x)\right\vert  & \le\vert\int\log q(x)d\mu(x)-\int\log f_{N}(x)d\mu(x)\vert\\
 & \quad+\frac{\alpha^{N}}{d_{j}}(\Vert\log f_{N}\Vert_{1}+\log\Vert f_{N}\Vert_{\infty})+\varepsilon.
\end{align*}
This implies the conclusion.
\end{proof}

The proof of Proposition \ref{prop:subsequence_AA} can be easily
adapted to the proof of the following corollary.
\begin{cor}
Assume that $p:X\times(0,\infty)\to\mathbb{R}_{+}$ is a measurable
function such that $p(\cdot,\lambda_{j})$ is continuous for all $j\in\mathbb{N}$
and such that there is $C>0$ so that $p(x,\lambda_{j})\ge C$ for
all $(x,\lambda_{j})$. Moreover, assume that $p(\cdot,\lambda)\in\operatorname{Dom}(\Delta)$
for all $\lambda\in \operatorname{sp}(-\Delta)$ and that 
\[
\lim_{j\to\infty}p(x,\lambda_{j})=q(x)\quad\text{for all }x\in X
\]
 and that both $p(x,\lambda)$ and $\Delta_{x}p(x,\lambda)$ are bounded
on $X\times \operatorname{sp}(-\Delta)$. Then there is a subsequence $\{\lambda_{k_{j}}\}$
of $\{\lambda_{j}\}$ such that 
\[
\lim_{j\to\infty}\frac{1}{d_{k_{j}}}\log\det P_{k_{j}}p(x,-\Delta)P_{k_{j}}=\int_{X}\log q(x)d\mu(x).
\]
\end{cor}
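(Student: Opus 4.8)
The plan is to mirror the proof of Proposition~\ref{prop:subsequence_AA}, substituting the determinant limit theorem Proposition~\ref{thm:Szego_uniflimit} for the trace-power theorem Theorem~\ref{thm:powers_6series} at the final step. The entire content of the argument is to upgrade the pointwise convergence $p(x,\lambda_j)\to q(x)$ to uniform convergence along a subsequence. Once that is achieved, the remaining hypotheses of Proposition~\ref{thm:Szego_uniflimit}---continuity of each $p(\cdot,\lambda_{k_j})$, the uniform lower bound $p(x,\lambda_{k_j})\ge C>0$, and uniform convergence to $q$---are immediate from the hypotheses of the corollary, and the conclusion drops out at once.

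First I would recall the estimate used in the proof of \cite[Lemma 3.3]{IR_2010}: there is a constant $C'>0$ such that for every $f\in\operatorname{dom}(\Delta)$ and all $x,y\in X$,
\[
|f(x)-f(y)|\le C'R(x,y)\Bigl(\sup_{z\in X}|\Delta f(z)|+\max_{p,q\in\partial X}|f(p)-f(q)|\Bigr).
\]
Applying this with $f=p(\cdot,\lambda_j)$, and using that both $p(x,\lambda)$ and $\Delta_x p(x,\lambda)$ are bounded on $X\times\operatorname{sp}(-\Delta)$, I obtain a single constant $C$, independent of $j$, with $|p(x,\lambda_j)-p(y,\lambda_j)|\le C\,R(x,y)$ for all $j$. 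This establishes equicontinuity of the family $\{p(\cdot,\lambda_j)\}$, which is moreover uniformly bounded by hypothesis. The Arzel\'a--Ascoli theorem then yields a subsequence $\{p(\cdot,\lambda_{k_j})\}$ converging uniformly on $X$; since it already converges pointwise to $q$, the uniform limit must be $q$. Applying Proposition~\ref{thm:Szego_uniflimit} to the sequence $\{p(\cdot,\lambda_{k_j})\}$ gives
\[
\lim_{j\to\infty}\frac{1}{d_{k_j}}\log\det P_{k_j}p(x,-\Delta)P_{k_j}=\int_X\log q(x)\,d\mu(x).
\]

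Because the structure is essentially identical to that of Proposition~\ref{prop:subsequence_AA}, I do not anticipate a genuine obstacle. The one point meriting care is that the uniform bounds on $p$ and $\Delta_x p$ across the entire spectrum produce a \emph{single} resistance-metric Lipschitz constant valid for every member of the sequence simultaneously; this uniformity is precisely what the $\operatorname{dom}(\Delta)$ hypothesis buys and what licenses the Arzel\'a--Ascoli extraction. I would also note in passing that the positivity hypothesis $p\ge C>0$ is what guarantees $q\ge C>0$, hence the integrability of $\log q$, so that Proposition~\ref{thm:Szego_uniflimit} applies to the extracted subsequence without further adjustment.
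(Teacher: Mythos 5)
Your proposal is correct and follows exactly the route the paper intends: the paper's own proof is simply the remark that the Arzel\'a--Ascoli argument of Proposition~\ref{prop:subsequence_AA} adapts directly, with Proposition~\ref{thm:Szego_uniflimit} replacing Theorem~\ref{thm:powers_6series} at the final step, which is what you did. Your added observations---that the uniform bounds on $p$ and $\Delta_x p$ yield a single resistance-metric Lipschitz constant, and that $p\ge C>0$ guarantees $q\ge C>0$ so that $\log q$ is integrable---correctly fill in the details the paper leaves implicit.
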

More generally, we have: 
\begin{prop}
\label{thm:GeneralSzego} Assume that $p:X\times(0,\infty)\to\mathbb{R}_{+}$
is a measurable function such that $p(\cdot,\lambda)$ is continuous
for all $\lambda\in \operatorname{sp}(-\Delta)$ and such that there is $C>0$ so that
$p(x,\lambda)\ge C$ for all $(x,\lambda)\in X\times\operatorname{sp}(-\Delta)$. Assume
that 
\begin{equation}
\lim_{\lambda\to\infty}p(x,\lambda)=q(x)\label{eq:limit-1}
\end{equation}
exists and the limit is uniform in $x$. Then
\begin{equation}
\lim_{\Lambda\to\infty}\frac{1}{d_{\Lambda}}\log\det P_{\Lambda}p(x,-\Delta)P_{\Lambda}=\int_{X}\log q(x)d\mu(x).\label{eq:Szego-1}
\end{equation}
\end{prop}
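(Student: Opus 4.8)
The plan is to deduce the statement directly from Theorem~\ref{thm:Mainthm} by specializing the test function to $F(x)=\log x$. For a $d_\Lambda\times d_\Lambda$ matrix $M$ whose spectrum lies in a positive real interval one has $\log\det M=\Tr\log M=\Tr F(M)$, so the asserted limit~\eqref{eq:Szego-1} is literally the instance of~\eqref{eq:GeneralizedSzego} with $F=\log$ and $\int_X F(q)\,d\mu=\int_X\log q\,d\mu$. Thus the task reduces to verifying that the present hypotheses are a special case of those of Theorem~\ref{thm:Mainthm} and that $\log$ is an admissible test function on the relevant interval.

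First I would transfer the standing hypotheses. The bound $p(x,\lambda)\ge C$ survives the uniform limit~\eqref{eq:limit-1}, so $q(x)\ge C>0$; since $q$ is continuous on the compact set $X$ it is bounded away from $0$, whence $\log q$ is bounded and $\int_X\log q\,d\mu$ is finite. I would also check that $p$ is bounded on $X\times\operatorname{sp}(-\Delta)$, which is not assumed outright: uniform convergence gives $p(x,\lambda)\le\Vert q\Vert_\infty+1$ for all $x$ and all sufficiently large $\lambda\in\operatorname{sp}(-\Delta)$, while only finitely many eigenvalues lie below any fixed threshold and on each of these $p(\cdot,\lambda)$ is continuous on $X$, hence bounded. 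With $\Vert p\Vert_\infty<\infty$ and $p(\cdot,\lambda_n)$ continuous with uniform limit $q$, the hypotheses of Theorem~\ref{thm:Mainthm} hold.

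Next I would pin down an interval $[A,B]$ with $A>0$. Running Lemma~\ref{lem:A_B} with $\varepsilon<C/2$, the operator $[q-\varepsilon]$ satisfies $[q-\varepsilon]\ge C/2>0$, so its smallest eigenvalue is positive; the remaining finite-dimensional compression $P_{\overline\Lambda}p(x,-\Delta)P_{\overline\Lambda}$ likewise has positive spectrum, since on each eigenspace it is the self-adjoint compression of multiplication by $p(\cdot,\lambda)\ge C$. Hence the constant $A$ produced by Lemma~\ref{lem:A_B} may be taken strictly positive, the spectrum of every $P_\Lambda p(x,-\Delta)P_\Lambda$ lies in $[A,B]\subset(0,\infty)$, and $F=\log$ belongs to $C[A,B]$. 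Theorem~\ref{thm:Mainthm} now gives $d_\Lambda^{-1}\Tr\log(P_\Lambda p(x,-\Delta)P_\Lambda)\to\int_X\log q\,d\mu$, and the identity $\Tr\log(P_\Lambda p(x,-\Delta)P_\Lambda)=\log\det P_\Lambda p(x,-\Delta)P_\Lambda$ finishes the proof.

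The point requiring the most care, and the one I would treat explicitly, is the reality and positivity of the spectrum of $P_\Lambda p(x,-\Delta)P_\Lambda$ when the symbol genuinely depends on $\lambda$. The compressed operator mixes eigenspaces on which $p$ acts through different multipliers, so it need not be self-adjoint and its eigenvalues are not obviously real, which is exactly what makes $\log\det$ meaningful. On a single eigenspace the diagonal block is the self-adjoint compression of multiplication by $p(\cdot,\lambda)\ge C$ and so has spectrum in $[C,\Vert p\Vert_\infty]$; the substance of the argument is that the full compression still has its spectrum confined to a positive real interval, which is precisely the information encoded in Lemma~\ref{lem:A_B} and exploited throughout Theorem~\ref{thm:Mainthm}. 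Should one prefer not to invoke Theorem~\ref{thm:Mainthm} as a black box, the same conclusion is reachable by adapting the sandwiching scheme of Proposition~\ref{thm:Szego_uniflimit}: approximate $q$ by a simple function $f_N$, use $(1-\varepsilon)f_N\le p(\cdot,\lambda)\le(1+\varepsilon)f_N$ for large $\lambda$, and organize the estimate through the low/high generation-of-birth splitting $\tilde\Gamma_J(\Lambda)\cup\Gamma_J(\Lambda)$ with $d_\Lambda^{-1}\sum_{\lambda\in\tilde\Gamma_J(\Lambda)}d_\lambda\to0$, as in the proof of Theorem~\ref{thm:Mainthm}.
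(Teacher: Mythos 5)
Your argument is correct, but it is not the route the paper takes: the paper's proof of Proposition~\ref{thm:GeneralSzego} is deliberately left as a sketch (``similar to the proof of Theorem~\ref{thm:Mainthm}''), meaning one re-runs that argument at the level of determinants --- write $\log\det P_\Lambda p(x,-\Delta)P_\Lambda$ as a sum over the eigenvalues $\lambda\le\Lambda$, split these by generation of birth into $\tilde\Gamma_J(\Lambda)\cup\Gamma_J(\Lambda)$, control the high-generation clusters by the per-eigenvalue determinant asymptotics of Proposition~\ref{thm:Szego_uniflimit} (block structure plus the determinant monotonicity of \cite[Corollary 7.7.4]{HoJo_MatAn90}), and kill the low-generation contribution using $d_\Lambda^{-1}\sum_{\lambda\in\tilde\Gamma_J(\Lambda)}d_\lambda\to 0$. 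You instead obtain the statement as a literal corollary of Theorem~\ref{thm:Mainthm} with $F=\log$, which is precisely the specialization the authors acknowledge (in the introduction, and in the remark preceding Proposition~\ref{thm:Szego_uniflimit}) but decline to carry out. Your supporting checks are the right ones and are done correctly: $q\ge C>0$ so $\log q$ is bounded; $p$ is bounded on $X\times\operatorname{sp}(-\Delta)$ even though Proposition~\ref{thm:GeneralSzego} does not assume boundedness, and only those values of $p$ enter \eqref{eq:psdo}, so Theorem~\ref{thm:Mainthm} applies (if one is fussy, redefine $p(x,\lambda)=q(x)$ off the spectrum); running Lemma~\ref{lem:A_B} with $\varepsilon<C/2$ gives $A\ge C/2>0$, so $\log\in C[A,B]$ --- strictly speaking one should take a continuous compactly supported extension of $\log|_{[A,B]}$, which suffices because both the spectra of the compressions and the range of $q$ lie in $[A,B]$ by the construction of $A$ and $B$ --- and $\Tr\log=\log\det$ closes the argument. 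As to what each approach buys: your derivation is shorter and shows the determinant theorem carries no content beyond the trace theorem once positivity pins the spectrum away from $0$; the paper's route yields explicit determinant estimates and was chosen, as the authors state, to illuminate technical details only sketched in \cite[Theorem 1]{OkRoStr_JFAA10}. Your closing caveat is well placed: since $p$ genuinely depends on $\lambda$, the compression $P_\Lambda p(x,-\Delta)P_\Lambda$ mixes eigenspaces and is not self-adjoint, so the reality and positive location of its spectrum rest entirely on Lemma~\ref{lem:A_B} and on the paper's block-diagonal treatment of the compression (e.g.\ the identity $\Tr(P_\Lambda p(x,-\Delta)P_\Lambda)^k=\sum_{\lambda\le\Lambda}\Tr(P_\lambda p(x,-\Delta)P_\lambda)^k$ in the proof of Theorem~\ref{thm:Mainthm}); but the paper's own intended proof leans on exactly the same facts, so this is a shared dependency rather than a gap peculiar to your argument.
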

\begin{proof}
The proof is  similar to the proof of Theorem \ref{thm:Mainthm}
and we omit the details.
\end{proof}

\section{Examples and Applications}\label{sec:ExamplesApp}
In this section we consider some applications of our main results. 
Our examples show how one can extract asymptotic behaviour of
operators from elementary integrals, but we also give a result on
spectral clustering for generalized Schr\"{o}dinger operators
following a line of argument due to  Widom \cite{Wid_JFA79}. 
  
\subsection{Examples}
\label{subsec:Example}
 Our results can be used to obtain asymptotics of  Riesz and Bessel operators, as well as
some specific generalized Schr\"{o}dinger operators.
\begin{example}
Let $p:(0,\infty)\to\mathbb{R}$ be $p(\lambda)=1+\lambda^{-\beta}$,
where $\beta$ is a positive real number. Then $p(-\Delta)=I+(-\Delta)^{-\beta}$.
We know from \cite{IR_2010} that $p(-\Delta)$ is bounded on $L^{p}(\mu)$
for all $p>1$. Let $F$ be a continuous function supported on $(-\Vert p(-\Delta)\Vert,\Vert p(-\Delta)\Vert)$.
Since $\lim_{\lambda\to\infty}p(\lambda)=1$, we have that
\[
\lim_{j\to\infty}\frac{\Tr F(P_{j}p(-\Delta)P_{j})}{d_{j}}=F(1)
\]
for any increasing sequence $\{\lambda_{j}\}$ of $6$-series or $5$-series
eigenvalues such that $\lambda_{j}$ has generation of birth $j$.
Moreover
\[
\lim_{\Lambda\to\infty}\frac{\Tr F(P_{\Lambda}p(-\Delta)P_{\Lambda})}{d_{\Lambda}}=F(1).
\]
Since $p(\lambda)\ge 1$ for all $\lambda>0$, we can apply Proposition
\ref{thm:Szego_uniflimit} and Proposition \ref{thm:GeneralSzego} and   obtain that
\[
\lim_{j\to\infty}\frac{1}{d_{j}}\log\det P_{j}p(-\Delta)P_{j}=0
\]
and
\[
\lim_{\Lambda\to\infty}\frac{1}{d_{\Lambda}}\log\det P_{\Lambda}p(-\Delta)P_{\Lambda}=0.
\]

Let $p:(0,\infty)\to\mathbb{R}$ be $p(\lambda)=1+(1+\lambda)^{-\beta}$,
where $\beta$ is a positive real number. Then the corresponding operator is  the Bessel operator given by $p(-\Delta)=I+(I-\Delta)^{-\beta}$. All the conclusions of the previous examples hold for this operator as well. 
\end{example}

%\begin{example}
%Let $p:(0,\infty)\to\mathbb{R}$ be $p(\lambda)=1+(1+\lambda)^{-\alpha}$,
%where $\alpha$ is a positive real number. Then $p(-\Delta)=I+(I-\Delta)^{-\alpha}$.
%We know from \cite{IR_2010} that $p(-\Delta)$ is bounded on $L^{p}(\mu)$
%for all $p>1$. Let $F$ be a continuous function supported on $(-\Vert p(-\Delta)\Vert,\Vert p(-\Delta)\Vert)$.
%Since $\lim_{\lambda\to\infty}p(\lambda)=1$, we have that
%\[
%\lim_{j\to\infty}\frac{\Tr F(P_{j}p(-\Delta)P_{j})}{d_{j}}=F(1)
%\]
%for any increasing sequence $\{\lambda_{j}\}$ of $6$-series or $5$-series
%eigenvalues such that $\lambda_{j}$ has generation of birth $j$.
%Moreover
%\[
%\lim_{\Lambda\to\infty}\frac{\Tr F(P_{\Lambda}p(-\Delta)P_{\Lambda})}{d_{\Lambda}}=F(1).
%\]
%Since $p(\lambda)\ge 1$ for all $\lambda>0$, we can apply Proposition
%\ref{thm:Szego_uniflimit} and Proposition \ref{thm:GeneralSzego} and   obtain that
%\[
%\lim_{j\to\infty}\frac{1}{d_{j}}\log\det P_{j}p(-\Delta)P_{j}=0
%\]
%and
%\[
%\lim_{\Lambda\to\infty}\frac{1}{d_{\Lambda}}\log\det P_{\Lambda}p(-\Delta)P_{\Lambda}=0.
%\]
%
%\end{example}

\begin{example}
Let $p(x,\lambda)=q(\lambda)+\chi(x)$, where $q$ is a bounded measurable
function on $(0,\infty)$ such that $\lim_{\lambda\to\infty}q(\lambda)=l$
exists and $\chi$ is continuous on $X$. Then $p(x,-\Delta)$
is a generalized Schr\"{o}dinger operator.  Since it is the sum of two bounded
operators it is bounded on $L^{2}(\mu)$. Let $F$ be a continuous function supported on $(-\Vert p(x,-\Delta)\Vert,\Vert p(x,-\Delta)\Vert)$.
Then, if $\{\lambda_{j}\}_{j\ge1}$ is an increasing sequence of $ $6-series
or, respectively, $5$-series eigenvalues, we have both of the
following equalities:
\[
\lim_{j\to\infty}\frac{\Tr F(P_{j}p(x,-\Delta)P_{j})}{d_{j}}=\int F(l+\chi(x))d\mu(x)=\lim_{\Lambda\to\infty}\frac{\Tr F(P_{\Lambda}p(x,-\Delta)P_{\Lambda})}{d_{\Lambda}}.
\]
% and 
% \[
% \lim_{\Lambda\to\infty}\frac{\Tr F(P_{\Lambda}p(x,-\Delta)P_{\Lambda})}{d_{\Lambda}}=\int F(l+\chi(x))d\mu(x).
% \]
In particular, if $l=0$, we have that
\[
\lim_{j\to\infty}\frac{\Tr F(P_{j}p(x,-\Delta)P_{j})}{d_{j}}=\int F(\chi(x))d\mu(x)=\lim_{\Lambda\to\infty}\frac{\Tr F(P_{\Lambda}p(x,-\Delta)P_{\Lambda})}{d_{\Lambda}}.
\]
% and 
% \[
% \lim_{\Lambda\to\infty}\frac{\Tr F(P_{\Lambda}p(x,-\Delta)P_{\Lambda})}{d_{\Lambda}}=\int F(\chi(x))d\mu(x).
% \]

If we further assume $p(x,\lambda)\ge C>0$ for all
$(x,\lambda)\in X\times(0,\infty)$ then
\[
\lim_{j\to\infty}\frac{1}{d_{j}}\log\det P_{j}p(x,-\Delta)P_{j}=\int\log(l+\chi(x))d\mu(x)%=\lim_{\Lambda\to\infty}\frac{1}{d_{\Lambda}}\log\det P_{\Lambda}p(x,-\Delta)P_{\Lambda}.
\]
% and
% \[
% \lim_{\Lambda\to\infty}\frac{1}{d_{\Lambda}}\log\det P_{\Lambda}p(x,-\Delta)P_{\Lambda}=\int\log(l+\chi(x))d\mu(x).
% \]
and the same is true if we replace $P_{j}$ with $P_{\Lambda}$ and
$d_{j}$ with $d_{\Lambda}$, so in particular if $l=0$, 
\[
\lim_{j\to\infty}\frac{1}{d_{j}}\log\det P_{j}p(x,-\Delta)P_{j}=\int\log(\chi(x))d\mu(x)=\lim_{\Lambda\to\infty}\frac{1}{d_{\Lambda}}\log\det P_{\Lambda}p(x,-\Delta)P_{\Lambda}.
\]
% and
% \[
% \lim_{\Lambda\to\infty}\frac{1}{d_{\Lambda}}\log\det P_{\Lambda}p(x,-\Delta)P_{\Lambda}=\int\log(\chi(x))d\mu(x).
% \]

\end{example}

\subsection{Application: Asymptotics of eigenvalue clusters
for general Schr\"{o}dinger operators}\label{subsec:asymptotics-of-eigenvalue}

Let $p:(0,\infty)\to\mathbb{R}$ be a measurable function and let
$\chi$ be a real-valued bounded measurable function on $X$. We call
the operator $H=p(-\Delta)+[\chi]$ a \emph{generalized Sch}r\emph{\"{o}dinger
operator} with  \emph{potential} $\chi$.  We study
the asymptotic behavior of spectra of generalized Schr\"{o}dinger
operators with continuous potentials and continuous $p$, generalizing
some results of \cite{OkStr_PAMS07}. 

We begin with a lemma that is a generalization of the key  \cite[Lemma 1]{OkStr_PAMS07}.
\begin{lem}
\label{lem:minmaxlemma}Let $p:(0,\infty)\to\mathbb{R}$ be a continuous
function such that there is $A\in\mathbb{R}$ with $p(\lambda)\ge A$
for all $\lambda\ge\lambda_{1}$, where $\lambda_{1}$ is the smallest
positive eigenvalue of $-\Delta$. For $i=1,2$, let $\chi_{i}$ be
real-valued bounded measurable functions on $X$. Let $H_{i}=p(-\Delta)+[\chi_{i}]$
denote the corresponding generalized Schr\"{o}dinger operators. For
$n\ge1$, the $n$th eigenvalues $\nu_{n}^{i}$ of $H_{i}$, $i=1,2$,
satisfy the following inequality:
\[
\vert\nu_{n}^{1}-\nu_{n}^{2}\vert\le\Vert\chi_{1}-\chi_{2}\Vert_{L^{\infty}}.
\]
\end{lem}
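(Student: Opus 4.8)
The plan is to bound the difference between eigenvalues of $H_1$ and $H_2$ using the min-max (Courant-Fischer) characterization of eigenvalues of self-adjoint operators that are bounded below. First I would observe that the hypotheses guarantee both operators are self-adjoint and bounded below: the continuous function $p$ satisfies $p(\lambda) \geq A$ on $\operatorname{sp}(-\Delta) \setminus \{\lambda_1\}$ (or more precisely for all eigenvalues, once we note the spectrum is discrete and bounded below), so by Corollary~\ref{cor:spectrumifpinfty} or by direct spectral calculus $p(-\Delta)$ has discrete spectrum bounded below, and adding the bounded self-adjoint multiplication operator $[\chi_i]$ shifts things by at most $\|\chi_i\|_{L^\infty}$. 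Consequently each $H_i$ has a well-defined sequence of eigenvalues $\nu_n^i$ arranged in increasing order (counted with multiplicity), and the min-max principle applies.

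The key step is to write, for each $n \geq 1$,
\[
\nu_n^i = \min_{\substack{V \subset D \\ \dim V = n}} \; \max_{\substack{u \in V \\ \|u\| = 1}} \langle H_i u, u \rangle,
\]
where the minimum runs over $n$-dimensional subspaces of the form domain (for which $D$, the span of finitely many eigenfunctions, is a core). Since $H_1 - H_2 = [\chi_1] - [\chi_2] = [\chi_1 - \chi_2]$ is multiplication by $\chi_1 - \chi_2$, for any unit vector $u$ we have
\[
\bigl| \langle H_1 u, u \rangle - \langle H_2 u, u \rangle \bigr| = \Bigl| \int (\chi_1 - \chi_2)(x)\, u(x)^2 \, d\mu(x) \Bigr| \leq \|\chi_1 - \chi_2\|_{L^\infty},
\]
using $\|u\|^2 = 1$ and $\mu(X) = 1$. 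This pointwise bound on the quadratic forms transfers directly to the min-max values.

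The final step is the transfer itself: fixing the optimal (or near-optimal) $n$-dimensional subspace $V$ for $H_2$, the displayed quadratic-form estimate gives $\max_{u \in V, \|u\|=1} \langle H_1 u, u \rangle \leq \max_{u \in V, \|u\|=1} \langle H_2 u, u \rangle + \|\chi_1 - \chi_2\|_{L^\infty}$, and taking the minimum over $V$ yields $\nu_n^1 \leq \nu_n^2 + \|\chi_1 - \chi_2\|_{L^\infty}$. Interchanging the roles of $H_1$ and $H_2$ gives the reverse inequality, and together these produce $|\nu_n^1 - \nu_n^2| \leq \|\chi_1 - \chi_2\|_{L^\infty}$. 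I do not expect a serious obstacle here; the one point requiring mild care is the rigorous justification that the min-max principle applies in this setting — namely verifying that $H_i$ is a bounded-below self-adjoint operator with purely discrete spectrum so that the $\nu_n^i$ are genuine eigenvalues rather than points of essential spectrum. This follows because $p(-\Delta)$ has discrete spectrum accumulating only at $+\infty$ (the perturbation $[\chi_i]$ being bounded cannot create essential spectrum), but it should be stated explicitly to make the variational argument legitimate.
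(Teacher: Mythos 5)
Your proposal is correct and is essentially the paper's own argument: the paper verifies that each $H_{i}$ is bounded below via $\langle H_{i}f,f\rangle\ge(A+\min_{X}\chi_{i})\Vert f\Vert_{2}^{2}$ and then defers the rest to the min--max comparison in \cite[Lemma 1]{OkStr_PAMS07}, which is exactly the Courant--Fischer argument you write out (perturbing the quadratic form by $[\chi_{1}-\chi_{2}]$, bounded by $\Vert\chi_{1}-\chi_{2}\Vert_{L^{\infty}}$ on unit vectors, and symmetrizing). One small caution: your parenthetical claim that $p(-\Delta)$ has purely discrete spectrum does not follow from the lemma's hypotheses alone ($p$ is only continuous and bounded below, so Corollary~\ref{cor:spectrumifpinfty} need not apply), but this does not affect the inequality, which holds for the min--max values in any case and matches the paper's implicit reading of $\nu_{n}^{i}$.
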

\begin{proof}
The hypothesis implies that
\[
\langle H_{i}f,f\rangle\ge(A+\min_{X}\chi_{i})\Vert f\Vert_{2}^{2}
\]
for all $f\in D$, $i=1,2$. Hence $H_{i}$ is bounded from below,
$i=1,2$. The remainder of the proof is identical to the proof of \cite[Lemma 1]{OkStr_PAMS07}.
 % Theorem XIII.2 of \cite{ReSi_MMMPIV} implies that
% \begin{equation}
% \nu_{n}^{i}=\sup_{\phi_{1},\dots,\phi_{n-1}}\inf_{\psi\in[\phi_{1},\dots,\phi_{n}]^{\perp};\Vert\psi\Vert_{L^{2}}=1}\langle H_{i}\psi,\psi\rangle.\label{eq:ntheigenvalue}
% \end{equation}
% Moreover, we have that
% \begin{align*}
% \langle H_{1}\psi,\psi\rangle & =\langle H_{2}\psi,\psi\rangle+\langle(\chi_{1}-\chi_{2})\psi,\psi\rangle\\
%  & \le\langle H_{2}\psi,\psi\rangle+\Vert\chi_{1}-\chi_{2}\Vert_{L^{\infty}}\Vert\psi\Vert_{L^{2}}.
% \end{align*}
% This inequality together with \eqref{eq:ntheigenvalue} implies the
% result.
\end{proof}

Assume that $p:(0,\infty)\to\mathbb{R}$
is a continuous function, that there is $\overline{\lambda}>0$
such that $p$ is increasing on $[\overline{\lambda},\infty)$ and
\begin{equation}
\vert p(\lambda)-p(\lambda^{\prime})\vert\ge c\vert\lambda-\lambda^{\prime}\vert^\beta\label{eq:pcondition}
\end{equation}
 for all $\lambda,\lambda^{\prime}\ge\overline{\lambda}$ and some constants $c>0$ and $\beta>0$.
Let $\chi$ be a continuous function on $X$ and $H=p(-\Delta)+[\chi]$
be the corresponding Schr\"{o}dinger operator. Let
$\{\lambda_{j}\}$ be a sequence of $6$-series eigenvalues of $-\Delta$
such that the separation between $\lambda_{j}$
and the next higher and lower eigenvalues of $-\Delta$ grows exponentially
in $j$. For example, if $\lambda_{1}$ is any 6-series eigenvalue with
generation of birth $1$ and if $\lambda_{j}=5^{j-1}\lambda_{1}$,
then the sequence $\{\lambda_{j}\}_{j\ge1}$ satisfies our assumption
(\cite{OkStr_PAMS07}, \cite[Chapter 3]{Str_Prin06}). Let $\tilde{\Lambda}_{j}$ be the portion of the spectrum
of $H$ lying in $[p(\lambda_{j})+\min\chi,p(\lambda_{j})+\max\chi]$.
Lemma \ref{lem:minmaxlemma} implies that, for large $j$, $\tilde{\Lambda}_{j}$
contains exactly $d_{j}$ eigenvalues $\{\nu_{i}^{j}\}_{i=1}^{d_{j}}$.
We call this the $p(\lambda_{j})$ \emph{cluster} of the eigenvalues
of $H$. If we translate by $p(\lambda_j)$ units to the left we obtain
clusters that lie in a fixed interval.  Define the characteristic
measure of the $p(\lambda_{j})$ cluster 
of $H$ by
\[
\Psi_{j}(\lambda)=\frac{1}{d_{j}}\sum_{i=1}^{d_{j}}\delta(\lambda-(\nu_{i}^{j}-p(\lambda_{j})).
\]
and observe that integrating the function $x^k$ against this measure yields
\begin{equation}
  \label{eq:psi_j}
  \langle\Psi_j,x^k\rangle=\frac{1}{d_j}\Tr \bigl(\overline{P}_j
  (p(-\Delta)+[\chi]-p(\lambda_j))\overline{P}_j\bigr)^k, 
\end{equation} 
for all $k\ge 0$, where $\overline{P}_j$ is the spectral projection for
$p(-\Delta)+[\chi]$ associated with the $p(\lambda_j)$
cluster. This allows us to analyze the weak limit of $\Psi_j$ using
Theorem \ref{thm:Mainthm}.

\begin{thm}\label{thm:specclusters}
The sequence $\{\Psi_{j}\}_{j\ge1}$ converges weakly to the pullback
of the measure $\mu$ under $\chi$ defined for all continuous
functions $f$ supported on $[\min \chi, \max \chi]$ by
\[
\langle\Psi_{0},f\rangle=\int_{X}f(\chi(x))d\mu(x).
\]
\end{thm}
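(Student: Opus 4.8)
The plan is to use the method of moments. Every $\Psi_j$ is a probability measure supported on the fixed compact interval $[\min\chi,\max\chi]$, since by construction the shifted cluster eigenvalues $\nu_i^j-p(\lambda_j)$ lie there, and $\Psi_0$, being the pushforward of $\mu$ under $\chi$, is supported on $\chi(X)\subseteq[\min\chi,\max\chi]$. Because all masses equal $1$ and the supports lie in one fixed compact set, weak convergence will follow once I show $\langle\Psi_j,x^k\rangle\to\langle\Psi_0,x^k\rangle=\int_X\chi(x)^k\,d\mu(x)$ for every $k\ge0$: given $f\in C[\min\chi,\max\chi]$ and $\varepsilon>0$, I approximate $f$ uniformly by a polynomial via Weierstrass and split $\langle\Psi_j-\Psi_0,f\rangle$ into the two approximation errors plus the difference of the (finitely many) moments.

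To compute the moments I use \eqref{eq:psi_j}. Writing $K_j:=H-p(\lambda_j)=(p(-\Delta)-p(\lambda_j))+[\chi]$ and $\overline{P}_j$ for the cluster projection, \eqref{eq:psi_j} reads $\langle\Psi_j,x^k\rangle=d_j^{-1}\Tr(\overline{P}_jK_j\overline{P}_j)^k$. The strategy is to replace $\overline{P}_j$ by the spectral projection $P_j$ of $-\Delta$ onto $E_j$. On $E_j$ the operator $p(-\Delta)$ acts as the scalar $p(\lambda_j)$, so $P_j(p(-\Delta)-p(\lambda_j))=0$ and hence $P_jK_jP_j=P_j[\chi]P_j$; applying Theorem~\ref{thm:powers_6series} to the $\lambda$-independent symbol $p(x,\lambda)=\chi(x)$ (for which $q=\chi$) gives $d_j^{-1}\Tr(P_j[\chi]P_j)^k\to\int_X\chi^k\,d\mu$. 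Thus the whole problem reduces to showing $d_j^{-1}\bigl|\Tr(\overline{P}_jK_j\overline{P}_j)^k-\Tr(P_j[\chi]P_j)^k\bigr|\to0$.

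The key step, which I expect to be the main obstacle, is the perturbation estimate $\|\overline{P}_j-P_j\|\to0$; this is the only place where the unboundedness of $H$ and the separation hypotheses are used in an essential way. Monotonicity of $p$ on $[\overline{\lambda},\infty)$ together with \eqref{eq:pcondition} and the exponential separation of the $\lambda_j$ from their neighbouring eigenvalues force the spectral gap $G_j$ separating $p(\lambda_j)$ from the rest of $\operatorname{sp}(p(-\Delta))$ to satisfy $G_j\to\infty$. Since $[\chi]$ is a bounded perturbation, for large $j$ a circle $\Gamma_j$ of radius $G_j/2$ about $p(\lambda_j)$ encloses exactly the $p(\lambda_j)$-cluster of $H$ and exactly the eigenvalue $p(\lambda_j)$ of $p(-\Delta)$. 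Representing both projections as Riesz integrals $\frac{1}{2\pi i}\oint_{\Gamma_j}(z-\cdot)^{-1}\,dz$ and using the resolvent identity $(z-H)^{-1}-(z-p(-\Delta))^{-1}=(z-H)^{-1}[\chi](z-p(-\Delta))^{-1}$ with $\operatorname{dist}(\Gamma_j,\operatorname{sp})\gtrsim G_j$ yields $\|\overline{P}_j-P_j\|=O(\|\chi\|_\infty/G_j)\to0$. The care here is in confirming that $\Gamma_j$ isolates precisely the cluster for large $j$, which is exactly what the growing gap provides.

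Granting $\|\overline{P}_j-P_j\|\to0$, I finish by a trace estimate. Set $A_j:=\overline{P}_jK_j\overline{P}_j$ and $B_j:=P_j[\chi]P_j$; both are self-adjoint with $\|A_j\|,\|B_j\|\le M:=\max(|\min\chi|,|\max\chi|)$, the bound on $A_j$ coming from the fact that $\overline{P}_j$ is the spectral projection of $H$ onto a cluster whose shifted eigenvalues lie in $[\min\chi,\max\chi]$. Using $P_jK_j=P_j[\chi]$, $K_jP_j=[\chi]P_j$, and the boundedness of $K_j\overline{P}_j=(H-p(\lambda_j))\overline{P}_j$, the expansion $\overline{P}_j=P_j+(\overline{P}_j-P_j)$ gives $\|A_j-B_j\|\le(3\|\chi\|_\infty+M)\,\|\overline{P}_j-P_j\|\to0$. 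Finally the telescoping identity $A_j^k-B_j^k=\sum_{i=0}^{k-1}A_j^i(A_j-B_j)B_j^{k-1-i}$ together with the finite-rank bound $|\Tr C|\le\operatorname{rank}(C)\,\|C\|$ (with rank at most $2d_j$) gives $d_j^{-1}|\Tr A_j^k-\Tr B_j^k|\le2kM^{k-1}\|A_j-B_j\|\to0$. Combining this with the limit from Theorem~\ref{thm:powers_6series} yields $\langle\Psi_j,x^k\rangle\to\int_X\chi^k\,d\mu$ for all $k$, completing the moment computation and hence the weak convergence.
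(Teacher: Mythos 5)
Your proposal is correct, but it follows a genuinely different route from the paper's. The paper never compares the cluster projection $\overline{P}_j$ of $H$ with the Laplacian projection $P_j$ in operator norm. Instead it approximates the potential uniformly by simple functions $\chi_N=\sum a_i\chi_{C_i}$ and exploits the fact (Lemma~\ref{lem:pow_clust}) that eigenfunctions localized on a single $N$-cell are \emph{exact} eigenfunctions of $H_N=p(-\Delta)+[\chi_N]$ with eigenvalues $p(\lambda_j)+a_i$, so the cluster moments of $H_N$ are computed by hand up to an $O\bigl((\alpha^N)^k\bigr)$ error; the passage from $H_N$ back to $H$ is done eigenvalue-by-eigenvalue via the min--max bound $\vert\nu_i^j-\tilde\lambda_i^j\vert\le\Vert\chi-\chi_N\Vert_\infty$ of Lemma~\ref{lem:minmaxlemma}. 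You instead run a Riesz-projection perturbation argument in the spirit of Widom's original proof: monotonicity of $p$, the lower bound \eqref{eq:pcondition}, and the exponential eigenvalue separation force the gap $G_j$ isolating $p(\lambda_j)$ in $\operatorname{sp}p(-\Delta)$ to tend to infinity (here Corollary~\ref{cor:spectrumifpinfty} confirms the spectrum is exactly $p(\operatorname{sp}(-\Delta))$), giving $\Vert\overline{P}_j-P_j\Vert=O(\Vert\chi\Vert_\infty/G_j)\to0$, after which $P_j(p(-\Delta)-p(\lambda_j))=0$ reduces everything to Theorem~\ref{thm:powers_6series} applied to $P_j[\chi]P_j$ together with the telescoping bound $\vert\Tr A_j^k-\Tr B_j^k\vert\le 2kd_jM^{k-1}\Vert A_j-B_j\Vert$; your grouping of the unbounded $K_j$ with an adjacent projection ($K_j\overline{P}_j$ bounded by $M$, $P_jK_j=P_j[\chi]$) correctly handles the domain issues. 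What each approach buys: yours gives a quantitative rate ($\Vert\overline{P}_j-P_j\Vert$ decays like an inverse power of the exponentially growing gap), handles the continuous $\chi$ directly without the simple-function machinery, and makes transparent exactly where the gap-growth hypothesis enters; the paper's route avoids all resolvent theory for unbounded operators and in fact needs only that the gaps eventually exceed a fixed multiple of $\Vert\chi\Vert_\infty$ (so the clusters are well defined and contain exactly $d_j$ eigenvalues), not that they grow without bound. One point you should make explicit rather than assert: identifying the contour integral over $\Gamma_j$ with the cluster projection $\overline{P}_j$ requires knowing that the $d_j$ eigenvalues of $H$ inside $\Gamma_j$ are precisely those lying in $[p(\lambda_j)+\min\chi,\,p(\lambda_j)+\max\chi]$ and no others; this follows from the sandwich $\nu_n(p(-\Delta))+\min\chi\le\nu_n(H)\le\nu_n(p(-\Delta))+\max\chi$, i.e.\ from Lemma~\ref{lem:minmaxlemma} applied with constant comparison potentials, so your argument still quietly relies on that lemma even though it dispenses with the simple-function approximation.
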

% \begin{proof}
% The proof is very similar to that
% of Theorem 1 of \cite{OkStr_PAMS07} so we omit the details.
% \end{proof}

% The proof of the theorem follows from the trace-type Szeg\"{o} limit
% theorem together with the following two lemmas. 
%  In particular, $p$ satisfies the hypothesis of both
% Corollary \ref{cor:spectrumifpinfty} and Lemma \ref{lem:minmaxlemma}.

\begin{lem}{\label{lem:pow_clust}}
Assume that $N>0$ and that $\chi_N=\sum_{i=1}^N a_i \chi_{C_i}$ is a
simple function,  where $\{C_{i}\}_{i=1}^{3^{N}}$ is a partition
of $X$ into $N$-cells. Let $H_{N}=p(-\Delta)+[\chi_{N}]$
be the corresponding generalized Schr\"{o}dinger operator,
$\tilde{\Lambda}_{j}^{N}$ the $p(\lambda_j)$ cluster of $H_N$, and let
$\overline{P}_{j}^{N}$ be the spectral projection for $H_N$ associated
with the $p(\lambda_j)$ cluster. Then
%\begin{equation}
%  \label{eq:lemm_powclust}
\[
  \lim_{j\to \infty}\frac{\Tr \bigl(\overline{P}_j^N
    (p(-\Delta)+[\chi_N]-p(\lambda_j))\overline{P}_j^N\bigr)^k}{d_j}=\lim_{j\to
    \infty}\frac{(P_j[\chi_N]P_j)^k}{d_j}=\int_X \chi_N(x)^kd\mu(x),
  \]
%\end{equation}
  for all $k\ge 0$.
\end{lem}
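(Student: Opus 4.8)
The plan is to exploit the fact that $\chi_N$ is constant on each $N$-cell, which forces the localized eigenfunctions of $-\Delta$ at $\lambda_j$ to be \emph{exact} eigenfunctions of $H_N$ sitting inside the $p(\lambda_j)$ cluster. Fix $j>N$ and recall the orthonormal basis $\{\overline{u}_k\}_{k=1}^{d_j^N}\cup\{v_k\}_{k=1}^{\alpha^N}$ of $E_j$ in which each $\overline{u}_k$ is supported on a single $N$-cell $C_i$, on which $\chi_N$ equals the constant $a_i$. Then $[\chi_N]\overline{u}_k=a_i\overline{u}_k$, and since $p(-\Delta)\overline{u}_k=p(\lambda_j)\overline{u}_k$ we obtain $H_N\overline{u}_k=(p(\lambda_j)+a_i)\overline{u}_k$. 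Thus $\overline{u}_k$ is an eigenfunction of $H_N$ with eigenvalue $p(\lambda_j)+a_i\in[p(\lambda_j)+\min\chi_N,\,p(\lambda_j)+\max\chi_N]$, so it lies in the cluster and $\overline{P}_j^N\overline{u}_k=\overline{u}_k$. Writing $M_j:=\overline{P}_j^N(H_N-p(\lambda_j))\overline{P}_j^N$, it follows that $M_j\overline{u}_k=\overline{P}_j^N(H_N-p(\lambda_j))\overline{u}_k=a_i\overline{u}_k$, so each localized basis vector is an eigenvector of $M_j$ with eigenvalue $a_i$.

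Next I would fix the dimension count: for $j$ large the cluster $\tilde{\Lambda}_j^N$ contains exactly $d_j$ eigenvalues, so $M_j$ is a self-adjoint operator on a $d_j$-dimensional space whose eigenvalues $\{\nu_i^j-p(\lambda_j)\}_{i=1}^{d_j}$ all lie in $[\min\chi_N,\max\chi_N]$. The $d_j^N$ localized vectors produce the eigenvalue $a_i$ with the multiplicity $m_j^N$ of localized functions on $C_i$; since $3^N m_j^N=d_j^N$ and $\mu(C_i)=3^{-N}$, their contribution to $\Tr M_j^k$ is $m_j^N\sum_i a_i^k=d_j^N\int_X\chi_N^k\,d\mu$. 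The remaining $d_j-d_j^N=\alpha^N$ eigenvalues are uncontrolled individually but bounded in modulus by $\|\chi_N\|_\infty$, hence contribute at most $\alpha^N\|\chi_N\|_\infty^k$. Consequently
\[
\Bigl|\frac{\Tr M_j^k}{d_j}-\int_X\chi_N^k\,d\mu\Bigr|\le\frac{\alpha^N}{d_j}\Bigl(\int_X|\chi_N|^k\,d\mu+\|\chi_N\|_\infty^k\Bigr),
\]
which tends to $0$ as $j\to\infty$ because $\alpha^N$ is fixed while $d_j\to\infty$; this is the first equality. The middle equality is precisely Lemma~\ref{lem:simplefunction} (the displayed middle term is missing a $\Tr$, so it should read $\Tr(P_j[\chi_N]P_j)^k$), and the common value $\int_X\chi_N^k\,d\mu$ closes the chain.

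The main obstacle is the dimension statement used at the start of the second paragraph, that the cluster is genuinely $d_j$-dimensional; once that is in hand, everything else is the one-line trace estimate above. Because $\chi_N$ is only bounded measurable rather than continuous, I cannot simply invoke the counting remark made for continuous potentials before Theorem~\ref{thm:specclusters}; instead I would rerun the min--max comparison of Lemma~\ref{lem:minmaxlemma} with $\chi_1=\chi_N$ and $\chi_2=0$, so that each eigenvalue of $H_N$ differs from one of $p(-\Delta)$ by at most $\|\chi_N\|_\infty$. The separation hypothesis~\eqref{eq:pcondition} together with the exponentially growing gaps around $\lambda_j$ ensures that for large $j$ the $d_j$-fold value $p(\lambda_j)$ is isolated from the rest of $\operatorname{sp}(p(-\Delta))$ by more than $2\|\chi_N\|_\infty$, so exactly these $d_j$ perturbed eigenvalues enter the cluster window and none other. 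The exact-eigenvalue computation of the first paragraph then forces the $d_j^N$ localized functions into the cluster, whence the non-localized remainder has the fixed dimension $\alpha^N$ and the estimate closes.
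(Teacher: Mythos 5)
Your proof is correct and follows essentially the same route as the paper's: the localized eigenfunctions are exact eigenfunctions of $H_N$ with eigenvalues $p(\lambda_j)+a_i$, giving the trace contribution $m_j^N\sum_{i}a_i^k$, while the remaining $\alpha^N$ cluster eigenvalues and the nonlocalized block $N_j$ contribute terms that vanish after dividing by $d_j$, with Lemma~\ref{lem:simplefunction} supplying the common limit $\int_X\chi_N^k\,d\mu$. Your explicit verification that the cluster contains exactly $d_j$ eigenvalues (running Lemma~\ref{lem:minmaxlemma} with $\chi_2=0$ and using the eigenvalue separation) spells out a point the paper defers to the proof of Theorem~\ref{thm:specclusters}, and you are right that the middle term in the statement should read $\Tr(P_j[\chi_N]P_j)^k$.
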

\begin{proof}
   Consider
  the first equality. 
If $\overline{u}$ is an eigenfunction of the basis
of $E_{j}$ that is localized in a single $N$-cell $C_{i}$, then
$H_{N}\overline{u}=(p(\lambda_{j})+a_{i})\overline{u}$. Thus
$\lambda_{i}^{j}:=p(\lambda_{j})+a_{i}$ is an eigenvalue of $H_{N}$
with multiplicity at least $m_{j}^{N}$. 
Doing this identifies $d_{j}^{N}$ eigenvalues in $\tilde{\Lambda}_{j}^{N}$
and we let $A_{j}^{N}$ be the set of the remaining $\alpha^{N}$
eigenvalues. Hence 
\[
\Tr(\overline{P}_j^N
    (p(-\Delta)+[\chi_N]-p(\lambda_j))\overline{P}_j^N\bigr)^k=m_{j}^{N}\sum_{i=1}^{3^N}a_i^k+\sum_{\nu\in
    A_{j}^{N}} \nu^k.
\]
Recall from the proof of Lemma \ref{lem:simplefunction} that
$\Tr(P_j[\chi_N]P_j)^k=m_{j}^{N}\sum_{i=1}^{3^N}a_i^k +\Tr (N_j)^k$. Since both $\sum_{\nu\in
    A_{j}^{N}} \nu^k$ and $\Tr (N_j)^k$ are bounded by a constant
  times $(\alpha^N)^k$,  the equality of the two limits follows.
The second equality is from Theorem \ref{thm:Mainthm}.
\end{proof}

%Then $A_{j}^{N}$ % might contain eigenvalues $\lambda_{i}^{j}$ if their
% multiplicity is greater then $m_{j}^{N}$ and the number of eigenvalues
% in $A_{j}^{N}$ equals $\alpha^{N}$.

\begin{proof}[Proof of Theorem \ref{thm:specclusters}]
Since $\chi$ is continuous, it can be approximated uniformly by a
sequence of simple functions $\chi_{N}=\sum_{i=1}^{3^{N}}a_{i}\chi_{C_{i}}$, of the type previously described.
%where $\{C_{i}\}_{i=1}^{3^{N}}$ is the partition of $X$ into $N$-cells.
We can choose $\chi_{N}$
such that $\min\chi\le\min\chi_{N}$ and $\max\chi_{N}\le\max\chi$
for all $N$ from which  $\tilde{\Lambda}_{j}^{N}$ is contained
in $[p(\lambda_{j})+\min\chi,p(\lambda_{j})+\max\chi]$. Moreover there
is $J_1$ depending only $\Vert \chi\Vert_\infty$ such that $j\ge J_1$
implies  $\tilde{\Lambda}_{j}^{N}$  contains
$d_{j}$ eigenvalues $\{\tilde{\lambda}_{i}^{j}\}_{i=1}^{d_{j}}$.

From Lemma \ref{lem:minmaxlemma}, for all $i$
and $j$,
$
\vert\nu_{i}^{j}-\tilde{\lambda}_{i}^{j}\vert\le\Vert\chi-\chi_{N}\Vert_{\infty}
$. Therefore, when $j\ge J_1$,
\begin{align*}
&\Bigl|\Tr \bigl(\overline{P}_j
    (p(-\Delta)+[\chi]-p(\lambda_j))\overline{P}_j\bigr)^k-
\Tr \bigl(\overline{P}_j^N
(p(-\Delta)+[\chi_N]-p(\lambda_j))\overline{P}_j^N\bigr)^k\Bigr|\\
&\quad
=\sum_{i=1}^{d_j}\bigl\vert(\nu_i^{j}-p(\lambda_j))^k-(\tilde{\lambda}_{i}^{j}-p(\lambda_j))^k\bigr\vert
\le d_j k\Vert \chi\Vert_{\infty}^{k-1}\Vert \chi-\chi_N\Vert_{\infty}.
  \end{align*}
Then, with $j\ge J_1$,
\begin{align*}
  \biggl\vert \langle \Psi_j,x^k\rangle-\int_{X} \chi^kd\mu\biggr\vert
  &\le k\Vert \chi\Vert_{\infty}^{k-1}\Vert \chi-\chi_N\Vert_{\infty}
  +\int_X\vert \chi_N^{k}-\chi^{k}\vert d\mu\\
 &\quad  +d_{j}^{-1}\biggl\vert \Tr \bigl(\overline{P}_j^N
(p(-\Delta)+[\chi_N]-p(\lambda_j))\overline{P}_j^N\bigr)^k - \int_{X}
\chi_N^{k}d\mu\biggr\vert.
\end{align*}
For sufficiently large $N$ both $\Vert\chi-\chi_N\Vert_\infty$ and
$\Vert \chi-\chi_N\Vert_{1}$ are less than $\varepsilon$ and we can
take $j>J_1$ so large that the last term is smaller than $\varepsilon$
by Lemma  \ref{lem:pow_clust}.
Applying the Stone-Weierstrass theorem completes the proof.
\end{proof}

\bibliographystyle{amsplain}
\bibliography{Szegobibl}

\end{document}